\documentclass[12pt]{article}
\usepackage{amssymb, amsmath, amsthm, amscd}
\usepackage[utf8]{inputenc}
\usepackage[all,cmtip]{xy}
\usepackage{enumitem}
\usepackage{setspace}
\usepackage{mathdots}

\usepackage[mathscr]{euscript}
\usepackage[colorlinks=true]{hyperref}
\hypersetup{colorlinks   = true}
\hypersetup{linkcolor=blue}
\usepackage{tikz}

\begin{document}

\mathchardef\mhyphen="2D
\newtheorem{The}{Theorem}[section]
\newtheorem{Lem}[The]{Lemma}
\newtheorem{Prop}[The]{Proposition}
\newtheorem{Cor}[The]{Corollary}
\newtheorem{Rem}[The]{Remark}
\newtheorem{Obs}[The]{Observation}
\newtheorem{SConj}[The]{Standard Conjecture}
\newtheorem{Titre}[The]{\!\!\!\! }
\newtheorem{Conj}[The]{Conjecture}
\newtheorem{Question}[The]{Question}
\newtheorem{Prob}[The]{Problem}
\newtheorem{Def}[The]{Definition}
\newtheorem{Not}[The]{Notation}
\newtheorem{Claim}[The]{Claim}
\newtheorem{Conc}[The]{Conclusion}
\newtheorem{Ex}[The]{Example}
\newtheorem{Fact}[The]{Fact}
\newtheorem{Formula}[The]{Formula}
\newtheorem{Formulae}[The]{Formulae}
\newtheorem{The-Def}[The]{Theorem and Definition}
\newtheorem{Prop-Def}[The]{Proposition and Definition}
\newtheorem{Lem-Def}[The]{Lemma and Definition}
\newtheorem{Cor-Def}[The]{Corollary and Definition}
\newtheorem{Conc-Def}[The]{Conclusion and Definition}
\newtheorem{Terminology}[The]{Note on terminology}
\newcommand{\C}{\mathbb{C}}
\newcommand{\R}{\mathbb{R}}
\newcommand{\N}{\mathbb{N}}
\newcommand{\Z}{\mathbb{Z}}
\newcommand{\Q}{\mathbb{Q}}
\newcommand{\Proj}{\mathbb{P}}
\newcommand{\Rc}{\mathcal{R}}
\newcommand{\Oc}{\mathcal{O}}
\newcommand{\Vc}{\mathcal{V}}
\newcommand{\Id}{\operatorname{Id}}
\newcommand{\pr}{\operatorname{pr}}
\newcommand{\rk}{\operatorname{rk}}
\newcommand{\del}{\partial}
\newcommand{\delbar}{\bar{\partial}}
\newcommand{\Cdot}{{\raisebox{-0.7ex}[0pt][0pt]{\scalebox{2.0}{$\cdot$}}}}
\newcommand\nilm{\Gamma\backslash G}
\newcommand\frg{{\mathfrak g}}
\newcommand{\fg}{\mathfrak g}
\newcommand{\Oh}{\mathcal{O}}
\newcommand{\Kur}{\operatorname{Kur}}
\newcommand\gc{\frg_\mathbb{C}}
\newcommand\hisashi[1]{{\textcolor{red}{#1}}}
\newcommand\dan[1]{{\textcolor{blue}{#1}}}
\newcommand\luis[1]{{\textcolor{orange}{#1}}}

\begin{center}

{\Large\bf Deformations of Holomorphic $p$-Contact Structures}

\end{center}

\begin{center}

{\large Dan Popovici}

\end{center}

\vspace{1ex}

\noindent{\small{\bf Abstract.} We investigate the notion of holomorphic $p$-contact structure, that we introduced recently in the scalar case with H. Kasuya and L. Ugarte and in the line-bundle case with K. Broder, in the context of a holomorphic family of compact complex manifolds equipped with a holomorphic family of holomorphic line bundles. Firstly, we show that the existence of a holomorphic $p$-contact structure is {\bf deformation-open} when a non-jumping assumption is made on the $\bar\partial$-cohomology of bidegree $(p,\,0)$. This ought to become a source of further examples of holomorphic $p$-contact manifolds. Secondly, we obtain a Berndtsson-type {\bf (semi-)positivity} result for the holomorphic vector bundle over the base induced, as a direct image, by the line bundle on the total space that supports the p-contact structures. Thirdly, we introduce a moduli space of relative holomorphic p-contact structures and we show that it is {\bf Hausdorff} and {\bf open} in a certain quotient topological space.}

\vspace{1ex}

\section{Introduction}\label{section:introd} The following notion was introduced and studied in [KPU25a] and [KPU25b] in the case where the line bundle $L$ is trivial and in [BP25] in the case of an arbitrary holomorphic line bundle $L$.  

\begin{Def}($(1)$ of Definition 1.2 in [BP25])\label{Def:hol_p-contact_bundle} Let $X$ be a compact complex manifold with $\mbox{dim}_\C X =n=2p+1=4l+3$. 

\vspace{1ex}

We say that $X$ is a {\bf (locally) holomorphic $p$-contact manifold} if there exist a holomorphic line bundle $L$ and an {\bf $L$-valued holomorphic $p$-contact structure} on $X$, to wit a form $\Gamma\in C^\infty_{p,\,0}(X,\,L)$ satisfying the following two conditions: \begin{eqnarray}\label{eqn:L_hol-p-contact_Gamma_introd}(a)\,\,\, \bar\partial\Gamma = 0  \hspace{5ex}\mbox{and}\hspace{5ex} (b)\,\,\,\Gamma\wedge\partial\Gamma \neq 0 \hspace{2ex}\mbox{at every point of}\hspace{2ex} X.\end{eqnarray}

If the line bundle $L$ is trivial, we say that the holomorphic $p$-contact structure $\Gamma$ is {\bf scalar-valued}. The line bundle $L$ is called a {\bf $p$-contact bundle}.

\end{Def}

Note that condition (a) in (\ref{eqn:L_hol-p-contact_Gamma_introd}) means that $\Gamma$ is an $L$-valued {\it holomorphic} $(p,\,0)$-form on $X$. In particular, such a $\Gamma$ defines a $\bar\partial$-cohomology class $[\Gamma]_{\bar\partial}\in H^{p,\,0}_{\bar\partial}(X,\,L)$. This class is a singleton since the only $\bar\partial$-exact $(p,\,0)$-form is the zero form. For this reason, we will identify the class $[\Gamma]_{\bar\partial}$ with its unique representative $\Gamma$, so we can write $\Gamma\in H^{p,\,0}_{\bar\partial}(X,\,L)$. As for the non-degeneracy condition (b) in (\ref{eqn:L_hol-p-contact_Gamma_introd}), it was explained in [BP25, Observation 2.1] that, although $\partial\Gamma$ does not make global sense as an $L$-valued $(p+1,\,0)$-form on $X$, the exterior product $\Gamma\wedge\partial\Gamma$ is a well-defined $L^2$-valued $(n,\,0)$-form on $X$ that equals $\Gamma\wedge D_h'\Gamma$ for any $C^\infty$ Hermitian fibre metric $h$ on $L$, where $D_h'$ is the $(1,\,0)$-part of the Chern connection $D_h = D_h' + \bar\partial$ of $(L,\,h)$.

Another observation (see $\S2.4$ of [BP25]) is that, whenever an $L$-valued holomorphic $p$-contact structure $\Gamma\in H^{p,\,0}_{\bar\partial}(X,\,L)$ exists on $X$, the holomorphic line bundle $K_X\otimes L^2$ is trivial (making the dual $L^{-1}$ of $L$ into a {\it spin structure} on $X$ -- spin structures identify with the square roots, if any, of the canonical bundle of $X$). Thus, $\Gamma\wedge\partial\Gamma\in H^{n,\,0}_{\bar\partial}(X,\,L^2)\simeq H^0(X,\,K_X\otimes L^2)\simeq\C$ identifies with a complex number after choosing a trivialisation of $K_X\otimes L^2$. We get a holomorphic map \begin{eqnarray}\label{eqn:gamma-map-homogeneous}H^{p,\,0}_{\bar\partial}(X,\,L)\ni\Gamma\stackrel{T}{\longmapsto}\Gamma\wedge\partial\Gamma\in\C\end{eqnarray} that is homogeneous of degree $2$.

\vspace{2ex}

In the present paper, we investigate families of {\it holomorphic $p$-contact structures} by having the $p$-contact manifold $X$ vary in a holomorphic family $\pi:{\cal X}\longrightarrow B$ of compact complex manifolds $(X_t)_{t\in B}$ and the $p$-contact line bundle $L$ in a holomorphic family $(L_t)_{t\in B}$ of holomorphic line bundles $L_t\longrightarrow X_t$. Specifically, we will consider a holomorphic line bundle ${\cal L}\longrightarrow{\cal X}$ over the total space of the family of manifolds and will set $L_t:={\cal L}_{|X_t}\longrightarrow X_t$, the restriction of ${\cal L}$ to the fibre over $t$, for every $t\in B$. Both ${\cal X}$ and $B$ are complex manifolds and $\pi:{\cal X}\longrightarrow B$ is a proper holomorphic submersion. 

In $\S$\ref{subsection:Hp0_line-bundle}, we observe that under the {\bf non-jumping assumption} (\ref{eqn:non-jumping_assumption}) -- also called hypothesis $(H1)$ -- that requires the constancy of the cohomology numbers $\mbox{dim}_\C H^{p,\,0}_{\bar\partial_t}(X_t,\,L_t)$, the direct image sheaf $\pi_\star(\Omega^p_{{\cal X}/B}\otimes{\cal L})$ is {\it locally free}. We will denote by $E={\cal H}^{p,\,0}({\cal L})$ the associated holomorphic vector bundle over $B$.

In $\S$\ref{subsection:def-open_p-contact}, we prove the first main result of the paper, Theorem \ref{The:deformation-openness_H1}. It can be loosely restated as

\begin{The}\label{The:introd_deformation-openness_H1} Under the {\bf non-jumping assumption} $(H1)$, the existence of an $L_t$-valued holomorphic $p$-contact structure on $X_t$ is a {\bf deformation-open} property.

\end{The}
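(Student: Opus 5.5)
The plan is to reduce the statement to two standard facts: holomorphic forms extend to nearby fibres, and the non-degeneracy condition (b) is open.

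Fix $t_0\in B$ and suppose $X_{t_0}$ carries an $L_{t_0}$-valued holomorphic $p$-contact structure $\Gamma_{t_0}\in H^{p,\,0}_{\bar\partial}(X_{t_0},\,L_{t_0})$. The first step uses $(H1)$. By the observation in $\S$\ref{subsection:Hp0_line-bundle}, the constancy of $t\mapsto\dim_\C H^{p,\,0}_{\bar\partial_t}(X_t,\,L_t)$ makes $\pi_\star(\Omega^p_{{\cal X}/B}\otimes{\cal L})$ locally free. The associated bundle $E={\cal H}^{p,\,0}({\cal L})$ then has fibre $E_t=H^{p,\,0}_{\bar\partial}(X_t,\,L_t)$ for every $t$. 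This is Grauert's base change theorem: once the dimension is constant, the base change map $E_t\to H^0(X_t,\,\Omega^p_{X_t}\otimes L_t)$ is an isomorphism. Consequently, on a small neighbourhood $U\ni t_0$ there is a holomorphic section of $E$ passing through $\Gamma_{t_0}$. Concretely, I would pick a holomorphic section $\Gamma$ of $\Omega^p_{{\cal X}/B}\otimes{\cal L}$ over $\pi^{-1}(U)$ whose restrictions $\Gamma_t:=\Gamma_{|X_t}$ are $\bar\partial_t$-closed $L_t$-valued $(p,\,0)$-forms with $\Gamma_{|X_{t_0}}=\Gamma_{t_0}$. Condition (a) of Definition \ref{Def:hol_p-contact_bundle} then holds for every $t\in U$ automatically.

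The second step checks condition (b) for $t$ near $t_0$. I would fix a $C^\infty$ Hermitian metric $h$ on ${\cal L}$ and restrict it to each $L_t$. By [BP25, Observation 2.1], $\Gamma_t\wedge\partial_t\Gamma_t=\Gamma_t\wedge D'_{h_t}\Gamma_t$. Via Ehresmann's theorem, shrinking $U$ and trivialising ${\cal X}\simeq X\times U$ as $C^\infty$ manifolds, this becomes a family of $L^2_t$-valued $(n,\,0)$-forms depending continuously, indeed smoothly, on $t$. The pointwise norm function
\[
(x,\,t)\mapsto |\Gamma_t\wedge D'_{h_t}\Gamma_t|_{h_t^2,\,\omega}(x)
\]
is then continuous on $X\times U$ for any fixed reference metric $\omega$. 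It is strictly positive on the compact set $X\times\{t_0\}$, so it stays positive on $X\times U'$ for some smaller neighbourhood $U'$ of $t_0$. Hence $\Gamma_t$ is an $L_t$-valued holomorphic $p$-contact structure for all $t\in U'$.

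I expect the only substantive point to be the first step. Without $(H1)$, the class $\Gamma_{t_0}$ might fail to extend holomorphically; the dimension of $H^{p,\,0}$ may drop nearby, leaving only non-extendable sections. So I would be careful to invoke base change, or equivalently the continuity of the orthogonal projections onto $\ker\Delta''_t$ in the Kodaira–Spencer framework, to get the extension. One concrete way to do this: take the harmonic projection of $\Gamma_{t_0}$ transported to $X_t$. Under constant dimension, the harmonic spaces $\ker\Delta''_t$ vary smoothly by Kodaira–Spencer, so this projection is $C^\infty$ in $t$ and equals $\Gamma_{t_0}$ at $t_0$. That suffices, since only continuity is needed for the second step.

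Alternatively, the second step can be phrased via the degree-2 map $T$ of (\ref{eqn:gamma-map-homogeneous}). The bundle $K_{X_t}\otimes L_t^2$ is trivial at $t_0$. Then $\Gamma_t\wedge\partial\Gamma_t$ is a holomorphic section of $K_{X_t}\otimes L_t^2$ on the compact manifold $X_t$, and it is nowhere zero if and only if it is nonzero at a single point, provided this bundle is still trivial. Triviality need not persist a priori, however, so I would rely on the direct compactness argument above rather than on this.
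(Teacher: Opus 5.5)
Your proposal is correct and follows essentially the same route as the paper's proof. You use $(H1)$ and Grauert's theorem to extend $\Gamma_{t_0}$ to a local holomorphic section of the vector bundle ${\cal H}^{p,\,0}({\cal L})$, which gives condition (a) on nearby fibres, and you then use the Ehresmann trivialisation, continuity of $(t,\,z)\mapsto\Gamma_t\wedge\partial_t\Gamma_t$ and compactness of the fibre to keep condition (b) for all $t$ close to $t_0$; your extra remarks (the metric $h$ to make sense of $\partial_t\Gamma_t$, and your caution that triviality of $K_{X_t}\otimes L_t^2$ need not persist a priori) are sound but not needed beyond this.
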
  

By this we mean that, whenever a holomorphic $p$-contact structure $\Gamma_0\in H^{p,\,0}_{\bar\partial_0}(X_0,\,L_0)$ exists on some fibre (say $X_0$, with $0\in B$), such a structure also exists on every sufficiently close nearby fibre $X_t$. Moreover, any such $\Gamma_0\in H^{p,\,0}_{\bar\partial_0}(X_0,\,L_0)$ deforms holomorphically to holomorphic $p$-contact structures $\Gamma_t\in H^{p,\,0}_{\bar\partial_t}(X_t,\,L_t)$ on $X_t$ for all $t\in B$ sufficiently close to $0$.

Without the non-jumping hypothesis $(H1)$, this result is emphatically false, as noticed in [KPU25b, Corollary 3.3] for scalar-valued holomorphic $p$-contact structures. (See also the discussion at the beginning of $\S$\ref{subsection:def-open_p-contact} below.) On the other hand, Theorem \ref{The:introd_deformation-openness_H1} ought to yield new examples of {\it holomorphic $p$-contact manifolds} by deforming known examples.

\vspace{2ex}

In $\S$\ref{section:families_p-contact-bundles}, we shift our perspective and discuss the positivity/negativity properties the vector bundles ${\cal L}\longrightarrow{\cal X}$ and ${\cal H}^{p,\,0}({\cal L})\longrightarrow B$ can or cannot have in the presence of a holomorphic $p$-contact structure.

The results in $\S$\ref{subsection:Hp0_curvature_compact-base} and $\S$\ref{subsection:L_curvature_p-contact} use consequences of results from [Pop26], respectively [BP25], to prove, under appropriate assumptions, that ${\cal H}^{p,\,0}({\cal L})\longrightarrow B$ admits no $C^\infty$ Hermitian fibre metric that makes it {\it quasi-negative} in a certain sense (cf. Proposition \ref{Prop:Hp0_non-negative_p-contact}), respectively that there is no (singular or otherwise) Hermitian fibre metric on ${\cal L}$ that makes it {\it semi-negative} in a certain sense (cf. Corollary \ref{Cor:L_curvature_p-contact}).

Proposition \ref{Prop:Hp0_non-negative_p-contact} and Corollary \ref{Cor:L_curvature_p-contact} are preliminary observations showing that the holomorphic $p$-contact condition is incompatible with certain (semi-)negativity properties of both the holomorphic line bundle ${\cal L}$ over the total space ${\cal X}$ and the holomorphic direct-image vector bundle ${\cal H}^{p,\,0}({\cal L})$ over the base ($B$ or ${\cal Y}$) of the family of manifolds. This naturally leads one to ask whether a Berndtsson-type result (cf. [Ber09, Theorem 1.2]) involving appropriate (semi-)positivity properties of ${\cal L}$ and ${\cal H}^{p,\,0}({\cal L})$ can hold in our $p$-contact setting.

\begin{Question}\label{Question:q-Berndtsson} If ${\cal L}\longrightarrow{\cal X}$ satisfies an appropriate (semi-)positivity condition and is fibrewise $p$-contact, does it follow that ${\cal H}^{p,\,0}({\cal L})\longrightarrow{\cal Y}$ is (semi-)positive in some sense?

\end{Question}  

Note that [Ber09] deals with $L_t$-valued $(n,\,0)$-forms (where $n$ is the fibre dimension), while we are dealing with $L_t$-valued $(p,\,0)$-forms when a $p$-contact structure exists on some (or all) of the fibres $X_t$. Thus, the drop from the top holomorphic degree $n$ to the intermediate degree $p<n$ will have to be compensated for by the $p$-contact hypothesis.

\vspace{2ex}

The second main result of the paper is obtained in $\S$\ref{subsection:E_positivity} as an indirect answer to Question \ref{Question:q-Berndtsson}.

\vspace{1ex}

With any $C^\infty$ Hermitian fibre metric $h$ on ${\cal L}\longrightarrow{\cal X}$ and any Hermitian metric $\omega$ on ${\cal X}$, we associate a $C^\infty$ Hermitian fibre metric $h_E$ on $E={\cal H}^{p,\,0}({\cal L})\longrightarrow B$ defined by the $L^2_{\omega_t,\,h_t}$-inner product $\langle\langle\,\cdot\,,\,\,\cdot\,\rangle\rangle_{\omega_t,\,h_t}$ on each fibre $E_t=H^{p,\,0}_{\bar\partial_t}(X_t,\,L_t)$ with $t\in B$. Similarly, any $h$ on ${\cal L}$ induces a $C^\infty$ Hermitian fibre metric $h_F$ on the holomorphic vector bundle $F={\cal H}^{n,\,0}({\cal L}^2)\longrightarrow B$. It, too, is defined by the $L^2_{h_t^2}$-inner product $\langle\langle\,\cdot\,,\,\,\cdot\,\rangle\rangle_{F_t,\,h_t^2}$ (no need for any $\omega_t$ here) on each fibre $F_t=H^{n,\,0}_{\bar\partial_t}(X_t,\,L_t^2)$ with $t\in B$. Our $(H1)$ and $p$-contact assumptions imply:

\vspace{1ex}

(i)\, the {\it well-definedness} of $F$ (without the need of any extra non-jumping assumption in bidegree $(n,\,0)$ for the line bundles $L_t^2$ -- this is automatic under our hypotheses and, unlike in [Ber09], holds without any K\"ahlerianity assumption on ${\cal X}$);

\vspace{1ex}

(ii)\, the fact that $F$ has {\it rank one}. Actually, $F$ is the vector bundle considered by Berndtsson in [Ber09] (induced here by ${\cal L}^2$ rather than ${\cal L}$), but our $p$-contact setting makes it a {\it line bundle}.

\vspace{1ex}

Another key thing that the holomorphic $p$-contact hypothesis does is to induce a {\it holomorphic symmetric bilinear} bundle map $A:E\times E\longrightarrow F$ (cf. Definition \ref{Def:bilinear-form}) acting as a bridge between the holomorphic vector bundles $E$ and $F$ over $B$. This further enables us to define a notion that we call {\bf A-twisted Griffiths (semi-)positivity} (cf. Definition \ref{Def:A-twisted_Griffiths}) for $(E,\,h_E)\longrightarrow B$. It can be construed as a Griffiths-type (semi-)positivity property of $E$ that is read off the line bundle $F$ after the Chern curvature form of $(E,\,h_E)$ has been transported to $F$ by $A$.

\vspace{1ex}

Our positivity results yield the following

\begin{The}\label{The:introd_F_positivity_Berndtsson-application} Let $\pi:{\cal X}\longrightarrow B$ be a holomorphic family of compact complex manifolds of dimension $n=2p+1=4l+3$ over an arbitrary complex manifold base $B$. Let ${\cal L}\longrightarrow{\cal X}$ be a holomorphic line bundle satisfying assumption $(H1)$ globally on $B$. Suppose there exists an $L_t$-valued holomorphic $p$-contact structure $\Gamma_t\in H^{p,\,0}_{\bar\partial_t}(X_t,\,L_t)$ on $X_t$ for every $t\in B$.

  If ${\cal X}$ admits a {\bf K\"ahler} metric and ${\cal L}\rightarrow{\cal X}$ has a $C^\infty$ Hermitian fibre metric $h$ whose Chern curvature form $i\Theta_h({\cal L})$ is {\bf (semi-)positive} everywhere on ${\cal X}$, then the Hermitian holomorphic vector bundle $(E,\, h_E)$ is {\bf A-twisted Griffiths (semi-)positive}.

\end{The}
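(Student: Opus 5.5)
The strategy is to reduce the statement to Berndtsson's theorem [Ber09, Theorem 1.2], applied to the line bundle ${\cal L}^2$, and then to transport the resulting positivity of $F$ back to $E$ through the bilinear map $A$.

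\textbf{Step 1: $F$ is a holomorphic line bundle.} By (H1) and the $p$-contact hypothesis, $K_{X_t}\otimes L_t^2$ is trivial for every $t\in B$. Hence $H^{n,0}_{\bar\partial_t}(X_t,L_t^2)\simeq\C$ for all $t$, so these dimensions do not jump. Grauert's theorem then makes $F={\cal H}^{n,0}({\cal L}^2)$ a holomorphic line bundle, which gives (i) and (ii) above. Its $L^2$ metric $h_F$ involves no $\omega_t$, since $\int_{X_t} c_n\,u\wedge\bar u\, h_t^2$ is intrinsic for $L_t^2$-valued $(n,0)$-forms.

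\textbf{Step 2: positivity of $F$.} Equip ${\cal L}^2$ with the metric $h^2$. Its curvature is $i\Theta_{h^2}({\cal L}^2)=2\,i\Theta_h({\cal L})$, which is (semi-)positive on the Kähler manifold ${\cal X}$. Berndtsson's theorem then gives that $(F,h_F)$ is (semi-)positive in the sense of Nakano, hence Griffiths. As $F$ has rank one, this just says $i\Theta_{h_F}(F)\ge 0$ (respectively $>0$) on $B$.

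Two points need checking here. First, Berndtsson works with a proper submersion whose total space is Kähler and with a base that is a domain (or a manifold). Since curvature positivity is local on $B$, it suffices to apply his theorem over small coordinate balls in $B$. Second, the strict case requires strict positivity of $i\Theta_h$ on ${\cal X}$, which is exactly what is assumed.

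\textbf{Step 3: transfer to $E$ via $A$.} Let $A(\Gamma,\Gamma')$ be the symmetrisation of $\Gamma\wedge\partial\Gamma'$. By Definition \ref{Def:bilinear-form} it is a holomorphic symmetric bilinear bundle map, and $A(\Gamma_t,\Gamma_t)=T(\Gamma_t)$ never vanishes because of the $p$-contact hypothesis. The key computation compares the Chern curvatures. For a local holomorphic section $\sigma$ of $E$, the section $A(\sigma,\sigma)$ is a local holomorphic section of $F$, and $\Theta_{h_F}(F)$ can be written through $i\partial\bar\partial\log\|A(\sigma,\sigma)\|^2_{h_F}$. I would then unfold Definition \ref{Def:A-twisted_Griffiths}, which I expect to read roughly as follows: the curvature of $(E,h_E)$ is transported by $A$, for instance as $A(i\Theta_{h_E}(E)\sigma,\sigma)+A(\sigma,i\Theta_{h_E}(E)\sigma)$ paired against $A(\sigma,\sigma)$ in $h_F$, and this quantity is required to be $\ge 0$ (respectively $>0$) in the direction of any $v\in T^{1,0}B$.

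The proof would then show that this transported quantity equals, or is bounded below by, $i\Theta_{h_F}(F)(v,\bar v)\,\|A(\sigma,\sigma)\|^2_{h_F}$. To do so, I would take $\sigma$ normal at $t_0$ for the Chern connection of $h_E$ and use the Leibniz rule $D'_F A(\sigma,\sigma)=2A(D'_E\sigma,\sigma)$. This rule holds if $A$ is compatible with the Chern connections, which is the delicate point. Step 2 then concludes.

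\textbf{Main obstacle.} The hard step is Step 3. In general $A$ is holomorphic but need not be parallel for $h_E$ and $h_F$, so second-fundamental-form type terms appear. Since $F$ has rank one, I would handle them by choosing the transported quantity so that these terms cancel, or by exploiting that the definition is presumably tailored so that the defect terms are absorbed. If that fails, I would fall back to checking that the defect terms are of the form $\|\cdot\|^2$ with a favourable sign, in the spirit of the curvature-decreasing property of holomorphic quotient maps.
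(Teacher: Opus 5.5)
Your Steps 1 and 2 follow the paper's route: $F$ has rank one, and Berndtsson's theorem is applied to $({\cal L}^2,h^2)$. Your guess of the definition is right up to a factor $2$: it is $\frac{1}{A(\Gamma,\Gamma)}A(i\Theta_{h_E}(E)\Gamma,\Gamma)$, read in a local frame of $F$. The genuine gap is Step 3, which you never carry out. Everything beyond Berndtsson rests on the Leibniz rule $D'_FA(u,v)=A(D'_Eu,v)+A(u,D'_Ev)$, and you leave it open. Your fallbacks are not available: the transported quantity is fixed by the definition, so nothing can be ``chosen'' to cancel, and no sign is known for any defect term.

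The paper fills this step by simply asserting the Leibniz rule, on the grounds that the horizontal $D'_E$ commutes with the vertical $D'_L$. It applies $\bar\partial$ for holomorphic $u,v$, takes $u=v=\Gamma$ a holomorphic $p$-contact section, and uses $A(\Gamma,\Gamma)$ as a frame of $F$. This gives $i\Theta_{h_F}(F)=\frac{2}{A(\Gamma,\Gamma)}A(i\Theta_{h_E}(E)\Gamma,\Gamma)$, so the twisted quantity would be exactly $\frac12\,i\Theta_{h_F}(F)$. That would be an identity, not an inequality, and pointwise in $\Gamma(t)$.

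Your suspicion is nevertheless justified. Step 3 cannot be closed at the stated generality, where $h_E$ is built from an arbitrary Hermitian $\omega$ on ${\cal X}$. The map $A$ and the metric $h_F$ do not involve $\omega$, but $h_E$ does. Replace $\omega$ by $(c\circ\pi)\,\omega$ with $c>0$ smooth on $B$. Then $h_E$ is multiplied by $c^{n-p}$, and $i\Theta_{h_E}(E)$ becomes $i\Theta_{h_E}(E)-(n-p)\,i\partial\bar\partial\log c\cdot\Id_E$. So the twisted quantity shifts by $-(n-p)\,i\partial\bar\partial\log c$, while $i\Theta_{h_F}(F)$ is unchanged. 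Hence the Leibniz rule is false in general. Note also that in the paper's computation $D'_E$ is applied to $D'_Lv$, which is not a section of $E$.

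Here is a concrete case. Take ${\cal X}=B\times\Proj^3$ and ${\cal L}=\pr_2^\star\Oc(2)$, with the pulled-back Fubini--Study metric (so $i\Theta_h({\cal L})\geq 0$ and ${\cal X}$ is K\"ahler) and the standard contact form. For the product K\"ahler metric, $E$ and $F$ are flat, so $i\Theta_{h_F}(F)=0$. Yet after replacing $\omega$ by $e^{|t|^2}\omega$, the twisted quantity is $-2\,i\partial\bar\partial|t|^2<0$, so even the conclusion fails for such $\omega$.

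Closing the gap therefore needs an extra hypothesis linking $\omega$ (hence $h_E$) to $h$ and to the family, or a modified definition of $h_E$ or of twisted positivity. A sharper estimate of defect terms will not do it.
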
 

As a matter of fact, this result follows by combining Theorem 1.2 in [Ber09] (restated here, for the reader's convenience, as Theorem \ref{The:Berndtsson} for the line bundle $F$ on $B$ -- this is the only reason why we need to assume K\"ahlerianity on ${\cal X}$, as it is Berndtsson's assumption) with our Theorem \ref{The:F-E_positivity} (which does not require ${\cal X}$ to be K\"ahler). The latter asserts the validity of the implications: \begin{eqnarray*}i\Theta_{h_F}(F)\geq 0 & \implies & (E,\,h_E)\geq 0 \hspace{2ex} (\mbox{\bf A-twisted Griffiths})\\
  i\Theta_{h_F}(F)> 0 & \implies & (E,\,h_E)> 0 \hspace{2ex} (\mbox{\bf A-twisted Griffiths}).\end{eqnarray*} Berndtsson's Theorem \ref{The:Berndtsson} yields the implications: \begin{eqnarray*}i\Theta_h({\cal L})\geq 0 & \implies &  i\Theta_{h_F}(F)\geq 0 \\
 i\Theta_h({\cal L})> 0 & \implies &  i\Theta_{h_F}(F)> 0\end{eqnarray*} when ${\cal X}$ is K\"ahler.

\vspace{2ex}

In $\S$\ref{section:moduli}, we propose, for every positive integer $r$, the notion of {\bf r-th moduli space of relative holomorphic p-contact structures} on $({\cal X},\,B,\,{\cal L})$ (cf. Definition \ref{Def:moduli-space}). It is defined as the topological space, modulo the right action of the unitary group $U_h({\cal L})$ of $C^\infty$ isometries of ${\cal L}$ with respect to a pregiven $C^\infty$ Hermitian fibre metric $h$, of holomorphic structures $D''$ on ${\cal L}$ that satisfy two conditions. The first condition requires $D''$ to satisfy a strengthened version of hypothesis $(H1)$, called hypothesis $(H_1^{(r)})$, in which the dimensions of the $\C$-vector spaces $H^{p,\,0}_{D''_t}(X_t,\,L_t)$ equal $r$ for all $t\in B$. The second condition requires the existence of a {\it continuous $p$-contact section} of the vector bundle ${\cal H}^{p,\,0}_{D''}({\cal L})\longrightarrow B$ on a neighbourhood of every point of $B$. (Cf. Definition \ref{Def:C_p_set}.)

Our main result in this section is Theorem \ref{The:moduli-space_prop}. It can be loosely restated as

\begin{The}\label{The:introd_moduli-space_prop} The moduli space ${\cal M}^{(r)}_p({\cal X},\,B,\,{\cal L})$ is {\bf Hausdorff} and, if $B$ is {\bf compact}, it is {\bf open} in the quotient topological space ${\cal P}^{(r)}_p({\cal X},\,B,\,{\cal L})/U_h({\cal L})$, where ${\cal P}^{(r)}_p({\cal X},\,B,\,{\cal L})$ consists of the holomorphic structures $D''$ on ${\cal L}$ that satisfy hypothesis $(H_1^{(r)})$.

\end{The}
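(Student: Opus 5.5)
The plan is to work in the space ${\cal P}:={\cal P}^{(r)}_p({\cal X},\,B,\,{\cal L})$ of holomorphic structures $D''$ on ${\cal L}$ satisfying $(H_1^{(r)})$, with the $C^\infty$ topology on $(0,1)$-connections. The right action of $U_h({\cal L})$ is by gauge transformations $D''\mapsto g^{-1}\circ D''\circ g$. The moduli space ${\cal M}^{(r)}_p={\cal C}_p/U_h({\cal L})$ is the image of the $U_h({\cal L})$-invariant subset ${\cal C}_p\subset{\cal P}$ singled out by the continuous $p$-contact section condition of Definition \ref{Def:C_p_set}.

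\textbf{Openness ($B$ compact).} Since the quotient map ${\cal P}\to{\cal P}/U_h({\cal L})$ is open and ${\cal C}_p$ is saturated, it suffices to show that ${\cal C}_p$ is open in ${\cal P}$.
\begin{itemize}
\item Fix $D''_0\in{\cal C}_p$. By compactness of $B$, choose a finite cover of $B$ by relatively compact open sets $U_j$ carrying continuous $p$-contact sections $\Gamma^{(j)}$ of ${\cal H}^{p,0}_{D''_0}({\cal L})$.
\item For $D''$ close to $D''_0$, the spaces $H^{p,0}_{D''_t}(X_t,L_t)=\ker D''_t$ on $(p,0)$-forms have constant dimension $r$ by $(H_1^{(r)})$. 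These kernels are the kernels of a continuous family of elliptic-type operators, $\bar\partial$-Laplacians built from $h$ and a fixed metric $\omega$ on ${\cal X}$.
\item Hence the $L^2$-orthogonal projections $P_{D'',t}$ onto them depend continuously on $(D'',t)$, jointly in $C^\infty$. The mechanism is the stability of the spectral projection over a small circle around $0$, using the uniform spectral gap on compact subsets of $B$ that constant dimension provides. This is the same argument as in $\S$\ref{subsection:Hp0_line-bundle} and Theorem \ref{The:deformation-openness_H1}.
\item Set $\Gamma^{(j)}_{D'',t}:=P_{D'',t}\Gamma^{(j)}_t$. This is a continuous section of ${\cal H}^{p,0}_{D''}({\cal L})$ over $U_j$ that converges to $\Gamma^{(j)}$ in $C^1$ uniformly on $\overline{U_j}$ (after shrinking slightly) as $D''\to D''_0$.
\item The non-degeneracy $\Gamma\wedge D'_h\Gamma\neq0$ is an open condition under $C^1$ perturbation on the compact set $\pi^{-1}(\overline{U_j})$. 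It therefore persists for $D''$ in a neighbourhood of $D''_0$. Intersecting these finitely many neighbourhoods gives $D''\in{\cal C}_p$.
\end{itemize}
The main obstacle is the uniform continuity of the projections in $(D'',t)$. Compactness of $B$ is used exactly here and in the finite-cover step.

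\textbf{Hausdorffness.} I would use the standard gauge-theoretic argument. Take $D''_1,D''_2\in{\cal C}_p$ with sequences $D''_{1,k}\to D''_1$ and $g_k\in U_h({\cal L})$ such that $D''_{1,k}\cdot g_k\to D''_2$. The goal is to show that $D''_2=D''_1\cdot g$ for some $g\in U_h$; this yields closedness of the orbit relation, hence Hausdorffness.
\begin{enumerate}
\item Writing $D''\cdot g=D''+g^{-1}D''g$, one gets $D''_{1,k}g_k=g_k(D''_{1,k}\cdot g_k-D''_{1,k})$, which is bounded.
\item Since $|g_k|=1$ pointwise ($U_h({\cal L})$ consists of unimodular functions, ${\cal L}$ being a line bundle), elliptic bootstrapping on $\bar\partial$-type equations gives uniform $C^\infty$ bounds on $g_k$.
\item Arzel\`a--Ascoli then extracts a subsequence $g_k\to g$ in $C^\infty$, with $g$ unitary and $D''_1\cdot g=D''_2$.
\end{enumerate}
The abelian nature of the structure group makes this easy: no compactness of $B$ is needed locally. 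Global control comes from working with $C^\infty_{\rm loc}$ topology, applying the bound on compact subsets of ${\cal X}$, and taking a diagonal subsequence. Hausdorffness of ${\cal M}^{(r)}_p$ then follows, as a subspace of a Hausdorff quotient.
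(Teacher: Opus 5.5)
Your proposal is correct. For openness you follow essentially the paper's route. You show that the saturated set ${\cal C}^{(r)}_p$ is open in ${\cal P}^{(r)}_p$ by continuously perturbing the local $p$-contact sections over a finite cover of the compact base, and you use that non-vanishing of $\Gamma\wedge\partial\Gamma$ is an open condition over compact sets. Your $L^2$-orthogonal (Riesz) projections onto $\ker D''_t$ make explicit what the paper phrases as ``a continuous vector bundle over $B\times V$'' to which $\Gamma$ extends. They are continuous precisely because the kernel dimension is the same $r$ for all $(D'',t)\in{\cal P}^{(r)}_p\times B$. Your shrinking to $U_j'\Subset U_j$ also handles the fact that $\Gamma$ is only defined on the open set $U$, while uniform estimates are needed on a compact set. (Compactness of $B$ itself enters only through the finite cover.)

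For Hausdorffness you genuinely diverge. The paper simply cites [Kob87, Corollary 7.1.15] for the Hausdorffness of ${\cal H}ol_h({\cal L})/U_h({\cal L})$ and passes to the subspace ${\cal M}^{(r)}_p$. You instead prove closedness of the orbit relation directly, using three facts: for a line bundle $D''\cdot g=D''+g^{-1}\bar\partial g$; the condition $|g_k|=1$ gives a free $C^0$ bound; and $\bar\partial g_k=g_k\beta_k$ with $\beta_k$ convergent bootstraps, via interior estimates for $\bar\partial$ on functions, to $C^\infty_{\rm loc}$ bounds. The citation is shorter. Your argument is self-contained and shows that the abelian structure group is what makes the gauge orbits well behaved. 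It also works on a non-compact total space ${\cal X}$ with the metrizable $C^\infty_{\rm loc}$ topology, so sequences suffice. This matters because part (1) is asserted for arbitrary $B$, whereas ${\cal X}$ is compact only when $B$ is, which is the setting in which such classical results are usually proved.
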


This naturally invites further questions as to the structure of the moduli space ${\cal M}^{(r)}_p({\cal X},\,B,\,{\cal L})$. For example, is it a {\it complex analytic space}? Is it a {\it complex manifold} in the neighbourhood of certain points? We hope to be able to take up these questions in future work.

\section{Families of holomorphic $p$-contact bundles}\label{section:families_p-contact-bundles}

We first set the stage in $\S$\ref{subsection:Hp0_line-bundle} before giving the main result of this section in $\S$\ref{subsection:def-open_p-contact}.

\subsection{The holomorphic vector bundle ${\cal H}^{p,\,0}({\cal L})$ over $B$}\label{subsection:Hp0_line-bundle}

Let $\pi:{\cal X}\longrightarrow B$ be a {\it proper holomorphic submersion}, where ${\cal X}$ is a complex manifold and $B\subset\C^N$ is an open ball centred at $0$, for some positive integer $N$. The fibres $X_t:=\pi^{-1}(t)$, with $t\in B$, of this holomorphic family of compact complex manifolds are supposed to have complex dimension $n=2p+1$ for some {\it odd} positive integer $p$. (This assumption is not needed in this $\S$\ref{subsection:Hp0_line-bundle}, but it will be in subsequent sections in relation to holomorphic $p$-contact structures.)

Let ${\cal L}\longrightarrow{\cal X}$ be a holomorphic line bundle over the total space of the family. For $t\in B$, we denote by $L_t:={\cal L}_{|X_t}\longrightarrow X_t$ the restriction of ${\cal L}$ to the fibre above $t$. It is a holomorphic line bundle on $X_t$.  

We will make the following {\bf non-jumping assumption} on the dimensions of the $L_t$-valued $\bar\partial_t$-cohomology spaces of bidegree $(p,\,0)$: \begin{eqnarray}\label{eqn:non-jumping_assumption} (H1)\hspace{3ex} \mbox{dim}_\C H^{p,\,0}_{\bar\partial_t}(X_t,\,L_t) \hspace{3ex}\mbox{is independent of}\hspace{1ex} t\in B,\end{eqnarray} where $\bar\partial_t$ is the canonical $\bar\partial$-operator on $L_t$, defining the holomorphic structure of $L_t$, extending to $L_t$-valued forms the $\bar\partial$-operator $\bar\partial_t$ that defines the complex structure of the compact complex manifold $X_t$.

Since $\pi$ is a holomorphic submersion, the sheaf $\Omega^p_{{\cal X}/B}$ of germs of relative holomorphic $(p,\,0)$-forms on ${\cal X}$ is locally free. We will denote by the same symbol the corresponding holomorphic vector bundle and by $\widetilde{\cal E}:=\Omega^p_{{\cal X}/B}\otimes{\cal L}$ the holomorphic vector bundle on ${\cal X}$ obtained by twisting it with ${\cal L}$. The non-jumping hypothesis $(H1)$ ensures, thanks to Grauert's {\it direct image theorem}, that the direct image sheaf \begin{eqnarray}\label{eqn:Hp0-L_def}{\cal H}^{p,\,0}({\cal L}):=\pi_\star\bigg(\Omega^p_{{\cal X}/B}\otimes{\cal L}\bigg) = \pi_\star\widetilde{\cal E}\end{eqnarray} is {\bf locally free}. We will denote by the same symbol the corresponding holomorphic vector bundle on $B$. The restrictions of $\widetilde{\cal E}$ to the fibres of the manifold family $\pi:{\cal X}\longrightarrow B$ are $\widetilde{\cal E}_{|X_t} = \Omega^p_{X_t}\otimes L_t$ and there is a canonical base-change isomorphism \begin{eqnarray*}(\pi_\star\widetilde{\cal E})_t\otimes_{{\cal O}_{B,\,t}}\C\stackrel{\simeq}{\longrightarrow} H^0(X_t,\,\widetilde{\cal E}_{|X_t}),  \hspace{5ex} t\in B.\end{eqnarray*} So the fibres of the vector bundle (associated with the sheaf) ${\cal H}^{p,\,0}({\cal L})=\pi_\star\widetilde{\cal E}$ are \begin{eqnarray*}{\cal H}^{p,\,0}({\cal L})_t=(\pi_\star\widetilde{\cal E})_t\otimes_{{\cal O}_{B,\,t}}\C \simeq H^0(X_t,\,\Omega^p_{X_t}\otimes L_t) = H^{p,\,0}_{\bar\partial_t}(X_t,\,L_t),  \hspace{5ex} t\in B.\end{eqnarray*} 

\vspace{2ex}

The weaker conclusion that, under hypothesis $(H1)$, ${\cal H}^{p,\,0}({\cal L})$ is a $C^\infty$ complex vector bundle over $B$ whose fibres are the above vector spaces can be obtained by applying the classical Kodaira-Spencer theory [KS60, Theorems 1-5]. Indeed, let $h$ be a $C^\infty$ Hermitian fibre metric on ${\cal L}$. For every $t\in B$, let $h_t$ be the restriction of $h$ to $L_t$ and let $\Delta_t'':=\bar\partial_t\bar\partial_t^\star + \bar\partial_t^\star\bar\partial_t:C^\infty_{r,\,s}(X_t,\,L_t)\longrightarrow C^\infty_{r,\,s}(X_t,\,L_t)$ be the induced $\bar\partial$-Laplacian on smooth $L_t$-valued forms of any bidegree $(r,\,s)$, where $\bar\partial_t^\star$ is the adjoint of $\bar\partial_t$ with respect to the $L^2$-inner product defined by $h_t$. Thus, $(\Delta_t'')_{t\in B}$ is a $C^\infty$ family of elliptic differential operators of order two. Since, for each $t$, $H^{p,\,0}_{\bar\partial_t}(X_t,\,L_t)$ is isomorphic (under the Hodge isomorphism) to the kernel of $\Delta_t'':C^\infty_{p,\,0}(X_t,\,L_t)\longrightarrow C^\infty_{p,\,0}(X_t,\,L_t)$, hypothesis $(H1)$ ensures that the dimension of the kernel of $\Delta_t''$ in bidegree $(p,\,0)$ is independent of $t\in B$. Thus, the Kodaira-Spencer theory implies the existence of a $C^\infty$ complex vector bundle over $B$ whose fibres are these kernels. The same then goes for the cohomology spaces $H^{p,\,0}_{\bar\partial_t}(X_t,\,L_t)$ with $t\in B$.

\vspace{2ex}

Recall that a similar holomorphic vector bundle, denoted there by $E$ (which would correspond in our notation to ${\cal H}^{n,\,0}({\cal L})$), was considered by Berndtsson as the setting of his Theorem 1.2 in [Ber09]. His bidegree $(n,\,0)$ is one key difference to our bidegree $(p,\,0)$ case. Moreover, he assumes the manifold ${\cal X}$ to be {\it K\"ahler} and this implies, thanks to a further semi-positivity assumption on ${\cal L}$ and to the Ohsawa-Takegoshi $L^2$ extension theorem, the analogue of our non-jumping condition $(H1)$: $\mbox{dim}_\C H^{n,\,0}_{\bar\partial_t}(X_t,\,L_t)$ is independent of $t$. (Recall that this dimension always depends upper semi-continuously on $t$, so the possibility of holomorphically extending any element of any given $H^{n,\,0}_{\bar\partial_{t_0}}(X_{t_0},\,L_{t_0})$ to the nearby spaces $H^{n,\,0}_{\bar\partial_t}(X_t,\,L_t)$ ensures dimension constancy.) When ${\cal L}$ is trivial, the non-jumping of each $h^{p,\,q}(X_t,\,\C)$ is implied by the far weaker assumption that the Fr\"olicher spectral sequence of each fibre $X_t$ degenerate at $E_1$. However, there are at least two peculiarities that make these observations inapplicable in our case:

\vspace{1ex}

(a)\, no version of the Ohsawa-Takegoshi $L^2$ extension theorem is currently known in any intermediate bidegree $(p,\,0)$ with $1\leq p<n$, even under the K\"ahler assumption;

\vspace{1ex}

(b)\, no scalar-valued holomorphic $p$-contact structure can exist on a compact complex manifold whose Fr\"olicher spectral sequence degenerates at $E_1$, as noticed in [KPU25a] -- we will later assume the existence of such a structure on at least one fibre $X_t$.

\vspace{1ex}

In other words, our non-jumping assumption $(H1)$ in bidegree $(p,\,0)$ ensures the existence of the holomorphic vector bundle ${\cal H}^{p,\,0}({\cal L})$ without any K\"ahlerianity (or Fr\"olicher degeneration) assumption on ${\cal X}$ or $X_t$ and without any (semi-)positivity assumption on ${\cal L}$.

\subsection{Deformation openness for holomorphic $p$-contact structures}\label{subsection:def-open_p-contact}

It was observed in [KPU25b, Corollary 3.3] that, for compact complex manifolds, the existence of holomorphic contact structures is not a deformation open property. The counter-example to deformation openness obtained there is provided by the Kuranishi family $(X_t)_{t\in B}$ of the {\it Iwasawa manifold} $X_0 = I^{(3)}$, a well-known compact complex $3$-dimensional manifold that admits {\it scalar-valued holomorphic contact structures}, the classical notion that coincides with our notion of {\it scalar-valued holomorphic $1$-contact structures} in complex dimension $3$. It was proved in Corollary 3.3 of [KPU25b] that {\bf no} small deformation $X_t$ belonging to either of the Nakamura classes (ii) or (iii) admits scalar-valued holomorphic contact structures. A key feature of that example (which was not exploited there) is that $h^{1,\,0}_{\bar\partial}(X_0) = 3$, while $h^{1,\,0}_{\bar\partial}(X_t) = 2$ for every $t\neq 0$ with $X_t$ in the Nakamura class  (ii) or (iii).

Obviously, when $n=3$, $p=1$. The starting point of the present work is the observation that the existence of an $L$-valued holomorphic $p$-contact structure becomes a {\it deformationn open} property if the cohomology dimension $h^{p,\,0}_{\bar\partial_t}(X_t,\,L_t) :=\mbox{dim}_\C H^{p,\,0}_{\bar\partial_t}(X_t,\,L_t)$ is independent of $t$. In other words, the non-jumping assumption $(H1)$ forces deformation openness.

\begin{The}\label{The:deformation-openness_H1} Let $\pi:{\cal X}\longrightarrow B$ be a holomorphic family of compact complex manifolds of dimension $n=2p+1=4l+3$, where $B\subset\C^N$ is an open ball centred at $0$ for some positive integer $N$. Let ${\cal L}\longrightarrow{\cal X}$ be a holomorphic line bundle satisfying assumption $(H1)$.

  Suppose there exists an {\bf $L_0$-valued holomorphic $p$-contact structure} $\Gamma_0\in H^{p,\,0}_{\bar\partial_0}(X_0,\,L_0)$ on $X_0$. Then, after possibly shrinking $B$ about $0$, there exists a holomorphic family $(\Gamma_t)_{t\in B}$ of {\bf $L_t$-valued holomorphic $p$-contact structures} $\Gamma_t\in H^{p,\,0}_{\bar\partial_t}(X_t,\,L_t)$ on $X_t$ whose member for $t=0$ is the given $\Gamma_0$.

\end{The}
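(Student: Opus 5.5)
The plan is to use the holomorphic vector bundle $E={\cal H}^{p,\,0}({\cal L})=\pi_\star(\Omega^p_{{\cal X}/B}\otimes{\cal L})$ constructed in $\S$\ref{subsection:Hp0_line-bundle} under $(H1)$, together with the base-change isomorphism $E_t\simeq H^{p,\,0}_{\bar\partial_t}(X_t,\,L_t)$, to extend $\Gamma_0$. Since $E$ is locally free, after shrinking $B$ about $0$ it is holomorphically trivial. Choose a holomorphic frame $e_1,\dots,e_r$ of $E$ over $B$ and write $\Gamma_0=\sum_j c_j\,e_j(0)$ with $c_j\in\C$. Then set $\Gamma_t:=\sum_j c_j\,e_j(t)$. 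This is a holomorphic section of $E$ with value $\Gamma_0$ at $t=0$. By construction $\Gamma_t$ is an $L_t$-valued holomorphic $(p,\,0)$-form on $X_t$ for each $t$, so condition (a) of Definition \ref{Def:hol_p-contact_bundle} holds automatically. Equivalently, $\Gamma:=(\Gamma_t)_t$ is a section of $\Omega^p_{{\cal X}/B}\otimes{\cal L}$ over $\pi^{-1}(B)$ that is holomorphic on the total space ${\cal X}$.

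It remains to check the non-degeneracy (b) for $t$ near $0$. Fix a $C^\infty$ Hermitian metric $h$ on ${\cal L}$ and use [BP25, Observation 2.1] fibrewise: $\Gamma_t\wedge\partial\Gamma_t=\Gamma_t\wedge D'_{h_t}\Gamma_t$ is a well-defined $L_t^2$-valued $(n,\,0)$-form on $X_t$. The key point is that this depends continuously (indeed smoothly) on $t$ as a section over ${\cal X}$. To see this, trivialise the family locally via Ehresmann, ${\cal X}\simeq X\times B$ as $C^\infty$ manifolds, so that all objects become smooth families of forms on the fixed manifold $X$. Alternatively, work with the relative operator $D'_{h}$ along fibres, which is a smooth first-order relative differential operator on ${\cal X}$, and apply it to the smooth section $\Gamma$. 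Either way $(t,x)\mapsto|\Gamma_t\wedge\partial\Gamma_t|_{h_t^2}(x)$ is continuous on ${\cal X}$, measured with respect to any smooth metric on the relative canonical bundle.

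Condition (b) at $t=0$ says this function is strictly positive on $X_0$. Since $X_0=\pi^{-1}(0)$ is compact and $\pi$ is proper, positivity persists on $\pi^{-1}(U)$ for some neighbourhood $U$ of $0$ by a standard compactness argument. Indeed, the zero set $Z$ of the function is closed in ${\cal X}$ and disjoint from $X_0$, so $\pi(Z)$ is closed (by properness) and does not contain $0$. Shrinking $B$ to a ball inside $B\setminus\pi(Z)$ gives the result.

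The main obstacle is the justification that $\Gamma_t\wedge\partial\Gamma_t$ varies continuously in $t$, since $\partial\Gamma_t$ on $X_t$ is taken with respect to the varying complex structure and the bundle $L_t$ is not globally trivial. I would handle this by working locally on ${\cal X}$ in holomorphic charts adapted to $\pi$ (coordinates $(z,t)$ with $\pi(z,t)=t$) and local holomorphic frames of ${\cal L}$. In such a chart $\Gamma$ is a relative $(p,0)$-form with coefficients holomorphic in $(z,t)$, and $\partial_t\Gamma_t$ is just the $z$-derivative. The local expressions of $\Gamma_t\wedge\partial\Gamma_t$ are therefore holomorphic in $(z,t)$, and the frame-change ambiguities cancel exactly as in [BP25, Observation 2.1]. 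This even shows that $\Gamma\wedge\partial_{{\cal X}/B}\Gamma$ is a holomorphic section of $K_{{\cal X}/B}\otimes{\cal L}^2$, which is stronger than continuity.
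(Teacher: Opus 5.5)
Your proposal is correct and follows essentially the same route as the paper: you extend $\Gamma_0$ to a holomorphic section of $E={\cal H}^{p,\,0}({\cal L})$ near $0$, then propagate the non-vanishing of $\Gamma_t\wedge\partial_t\Gamma_t$ from the compact fibre $X_0$ to nearby fibres by continuity and compactness. The differences are cosmetic: you justify the joint regularity in $(t,z)$ more carefully, via adapted holomorphic charts, which shows that $\Gamma\wedge\partial_{{\cal X}/B}\Gamma$ is even a holomorphic section of $K_{{\cal X}/B}\otimes{\cal L}^2$; and you use that the proper map $\pi$ is closed, where the paper uses a finite-cover argument on $X$ after an Ehresmann trivialisation.
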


\noindent {\it Proof.} By hypothesis, $\Gamma_0\in{\cal H}^{p,\,0}({\cal L})_0$ is a non-zero element in the fibre at $0\in B$ of the holomorphic vector bundle ${\cal H}^{p,\,0}({\cal L})$ over $B$ (introduced in $\S$\ref{subsection:Hp0_line-bundle}). Thus, $\Gamma_0$ extends (non-uniquely and non-canonically) to a non-vanishing holomorphic section $\Gamma$ of this vector bundle on a small neighbourhood of $0$ in $B$. After possibly shrinking $B$, we may assume this neighbourhood to be $B$. We get $\Gamma\in H^0(B,\,{\cal H}^{p,\,0}({\cal L}))$ with $\Gamma(0) = \Gamma_0$. For every $t\in B$, we set \begin{eqnarray*}\Gamma_t:=\Gamma(t)\in{\cal H}^{p,\,0}({\cal L})_t = H^{p,\,0}_{\bar\partial_t}(X_t,\,L_t).\end{eqnarray*} By construction, after identifying each $\Gamma_t$ with its (singleton) $\bar\partial_t$-cohomology class, we have $\bar\partial_t\Gamma_t = 0$ for all $t\in B$.

We have thus got a $C^\infty$ (hence also continuous) map \begin{eqnarray}\label{eqn:map-identification_Gamma_Ehresmann}{\cal X}\simeq B\times X\ni(t,\,z)\longmapsto\Gamma_t(z)\wedge(\partial_t\Gamma_t)(z),\end{eqnarray} where $t\in B$ is the base variable and $z\in X_0\simeq X$ is the fibre (vertical) variable obtained after identifying each fibre $X_t$ with a fixed $C^\infty$ manifold $X$. (Recall the classical Ehresmann theorem: the holomorphic family $\pi:{\cal X}\longrightarrow B$ of compact complex manifolds is necessarily $C^\infty$ trivial over the simply connected base $B$, so we have a $C^\infty$ isomorphism ${\cal X}\simeq B\times X$, where $X$ is the $C^\infty$ manifold underlying the fibres $X_t$.)

Now, for every point $x_0\in X$, $\Gamma_0(x_0)\wedge(\partial_0\Gamma_0)(x_0)$ is non-zero. Hence, by continuity, there are neighbourhoods $U_{x_0}\subset X$ of $x_0\in X$ and $V_{x_0}\subset B$ of $0\in B$ such that $\Gamma_t(z)\wedge(\partial_t\Gamma_t)(z)$ remains non-zero for every $(t,\,z)\in V_{x_0}\times U_{x_0}$. Since $X$ is compact, finitely many such open subsets $U_{x_0}$ cover $X$. Taking the intersection $B_0$ of the (finitely many) corresponding open neighbourhoods $V_{x_0}$ of $0$ in $B$, we get \begin{eqnarray*}\Gamma_t\wedge\partial_t\Gamma_t\neq 0 \hspace{3ex} \mbox{everywhere on}\hspace{1ex} X_t, \hspace{3ex} t\in B_0.\end{eqnarray*}

This proves that $\Gamma_t$ is an $L_t$-valued holomorphic $p$-contact structure on $X_t$ for every $t\in B_0$. \hfill $\Box$

\vspace{2ex}

Recall that a large number of examples of holomorphic $p$-contact manifolds with scalar-valued $p$-contact structures were exhibited in [KPU25a] and [KPU25b]. However, few examples of holomorphic $p$-contact manifolds with $L$-valued $p$-contact structures for non-trivial holomorphic line bundles $L$ were observed in [BP25] and are currently known. Theorem \ref{The:deformation-openness_H1} goes some distance towards producing new examples of such manifolds as small deformations of existing ones when assumption $(H1)$ is satisfied. It thus turns deformation families into sources of examples.

\vspace{2ex}

We now give an application to the structure of the holomorphic vector bundle $E:={\cal H}^{p,\,0}({\cal L})\longrightarrow B$. We start with a general observation about holomorphic $p$-contact bundles.

Let $L$ be a holomorphic line bundle over a compact complex manifold $X$ with $\mbox{dim}_\C X =n = 2p+1$ and $p$ odd. If an $L$-valued holomorphic $p$-contact structure $\Gamma\in H^{p,\,0}_{\bar\partial}(X,\,L)$ exists, we saw in the introduction that the holomorphic map (\ref{eqn:gamma-map-homogeneous}), which is homogeneous of degree $2$, can be viewed as assuming values in $\C$. Thus, the set \begin{eqnarray*}H^{p,\,0}_{\bar\partial}(X,\,L)_{p\mbox{-con}}:=\bigg\{\Gamma\in H^{p,\,0}_{\bar\partial}(X,\,L)\,\mid\,T(\Gamma)\neq 0\bigg\}\subset H^{p,\,0}_{\bar\partial}(X,\,L)\end{eqnarray*} of $L$-valued holomorphic $p$-contact structures on $X$ is a non-empty {\it Zariski-open} subset of $H^{p,\,0}_{\bar\partial}(X,\,L)$.

\begin{Cor}\label{Cor:p-contact_basis} If an $L$-valued holomorphic $p$-contact structure exists on $X$, the $\C$-vector space $H^{p,\,0}_{\bar\partial}(X,\,L)$ admits a $\C$-basis consisting solely of holomorphic $p$-contact forms $\Gamma_1,\dots, \Gamma_r$, where $r:=h^{p,\,0}_{\bar\partial}(X,\,L)$ is the dimension of $H^{p,\,0}_{\bar\partial}(X,\,L)$. We will call such a basis a {\bf $p$-contact basis}.

\end{Cor}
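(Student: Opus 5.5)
The idea is that a non-empty Zariski-open subset of a finite-dimensional complex vector space cannot be contained in a proper linear subspace. Hence it contains a basis.

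Set $V:=H^{p,\,0}_{\bar\partial}(X,\,L)$, of dimension $r\geq 1$; $r\geq 1$ because a $p$-contact structure $\Gamma$ exists and is non-zero. As observed just before the statement, after fixing a trivialisation of $K_X\otimes L^2$ the map $T:V\to\C$, $T(\Gamma)=\Gamma\wedge\partial\Gamma$, is a homogeneous polynomial of degree $2$ in linear coordinates on $V$. Indeed, $\Gamma\mapsto\Gamma\wedge D_h'\Gamma$ is the restriction to the diagonal of a bilinear map $V\times V\to H^{n,\,0}_{\bar\partial}(X,\,L^2)\simeq\C$. Writing $\Gamma=\sum_j c_j e_j$ in some basis $(e_j)$ of $V$ gives $T(\Gamma)=\sum_{j,k}c_jc_k\,e_j\wedge D_h'e_k$, a quadratic form $Q$ in the $c_j$. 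By hypothesis $T(\Gamma)\neq 0$ for some $\Gamma$, so $Q$ is not identically zero. Hence $Z:=\{T=0\}$ is a proper algebraic hypersurface (a quadric cone) in $V\simeq\C^r$, and $V_{p\mbox{-con}}=V\setminus Z$.

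Next, I construct the basis inductively. Suppose linearly independent $\Gamma_1,\dots,\Gamma_k\in V\setminus Z$ have been chosen with $k<r$, and let $W=\mbox{span}(\Gamma_1,\dots,\Gamma_k)$, a proper linear subspace. The complement $V\setminus W$ is a non-empty Zariski-open set, and so is $V\setminus Z$. Since $V$ is irreducible, two non-empty Zariski-open subsets intersect. Concretely, the product of a non-zero linear form vanishing on $W$ with $Q$ is a non-zero polynomial, so it has a non-zero value somewhere. Pick $\Gamma_{k+1}$ in $(V\setminus W)\cap(V\setminus Z)$. Then $\Gamma_1,\dots,\Gamma_{k+1}$ are linearly independent $p$-contact forms. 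Start with $\Gamma_1=\Gamma$ and stop at $k=r$.

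An alternative one-shot argument: take any basis $(e_j)$ and any $p$-contact $\Gamma$, and consider $\Gamma+\lambda e_j$ for $\lambda\in\C$. The quantity $T(\Gamma+\lambda e_j)$ is a polynomial in $\lambda$ with non-zero constant term, so it vanishes for only finitely many $\lambda$. One can then choose generic $\lambda_j$ such that the family $\{\Gamma+\lambda_je_j\}$ consists of $p$-contact forms and remains a basis, after replacing one vector by $\Gamma$ if needed. I would use the inductive version as the cleaner route.

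The only point needing care, and the main obstacle, is justifying that $T$ is genuinely a polynomial on $V$, so that Zariski-openness holds. This rests on the bilinearity of $(\Gamma,\Gamma')\mapsto\Gamma\wedge\partial\Gamma'$ and on the identification of the target with $\C$ via the triviality of $K_X\otimes L^2$ recalled from [BP25]. Both are already granted in the text preceding the statement. Note also that "non-zero everywhere on $X$" is equivalent to $T(\Gamma)\neq 0$ as a number, because a holomorphic section of a trivial bundle on compact $X$ is constant.
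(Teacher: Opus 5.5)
Your proof is correct and follows the paper's route: induct on $k$, choosing $\Gamma_{k+1}$ in the non-empty Zariski-open $p$-contact locus but outside the span of $\Gamma_1,\dots,\Gamma_k$. The paper justifies the existence of $\Gamma_{k+1}$ by noting that a proper subspace is Lebesgue-negligible while the $p$-contact locus has full measure, whereas you justify it algebraically via $\ell\cdot Q\not\equiv 0$; this is the same point in a different language. One small precision: the bilinear map whose diagonal is $T$ should be the symmetrised one, $\frac{1}{2}(\Gamma\wedge D_h'\Gamma'+D_h'\Gamma\wedge\Gamma')$ (the paper's $A$), because $\bar\partial(\Gamma\wedge D_h'\Gamma')=-\Theta_h(L)\wedge\Gamma\wedge\Gamma'$ need not vanish off the diagonal, although this does not change $Q$.
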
  

\noindent {\it Proof.} We proceed by induction: suppose that $\Gamma_1,\dots, \Gamma_k\in H^{p,\,0}_{\bar\partial}(X,\,L)_{p\mbox{-con}}$ have already been constructed and are $\C$-linearly independent. If their span $\C\,\langle\Gamma_1,\dots, \Gamma_k\rangle$ is strictly contained in $H^{p,\,0}_{\bar\partial}(X,\,L)$, it is Lebesgue-negligible, so it cannot contain the full-measure subset $H^{p,\,0}_{\bar\partial}(X,\,L)_{p\mbox{-con}}$ of $H^{p,\,0}_{\bar\partial}(X,\,L)$. Consequently, there exists $\Gamma_{k+1}\in H^{p,\,0}_{\bar\partial}(X,\,L)_{p\mbox{-con}}\setminus\C\,\langle\Gamma_1,\dots, \Gamma_k\rangle$. \hfill $\Box$

\begin{Cor}\label{Cor:p-contact_frame} The setting is the same as in Theorem \ref{The:deformation-openness_H1}. In particular, ${\cal L}\longrightarrow{\cal X}$ is supposed to satisfy  assumption $(H1)$ and an {\bf $L_0$-valued holomorphic $p$-contact structure} $\Gamma_0\in H^{p,\,0}_{\bar\partial_0}(X_0,\,L_0)$ is supposed to exist on $X_0$. Let $r$ be the rank of the holomorphic vector bundle $E:={\cal H}^{p,\,0}({\cal L})\longrightarrow B$. Fix an arbitrary $p$-contact basis $\{\Gamma_1^0,\dots, \Gamma_r^0\}$ of $E_0=H^{p,\,0}_{\bar\partial_0}(X_0,\,L_0)$ (as given by Corollary \ref{Cor:p-contact_basis}).

  Then, after possibly shrinking $B$ about $0$, there exists a holomorphic frame $\Gamma_1,\dots , \Gamma_r\in H^0(B,\,E)$ of $E$ such that:

\vspace{1ex}

(i)\, $\Gamma_j(0) = \Gamma_j^0$ for all $j=1,\dots , r$;

\vspace{1ex}

(ii)\, $\Gamma_j(t)\in E_t=H^{p,\,0}_{\bar\partial_t}(X_t,\,L_t)$ is an $L_t$-valued holomorphic {\bf $p$-contact} form on $X_t$ for all $t\in B$ and all $j=1,\dots , r$.

\vspace{1ex}

Such a frame $\{\Gamma_1,\dots , \Gamma_r\}$ will be called a {\bf holomorphic $p$-contact frame} of $E$.

\end{Cor}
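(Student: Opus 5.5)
The plan is to combine the local triviality of the holomorphic vector bundle $E={\cal H}^{p,\,0}({\cal L})$, obtained from $(H1)$ in $\S$\ref{subsection:Hp0_line-bundle}, with the continuity argument in the proof of Theorem \ref{The:deformation-openness_H1}, applied to each of the finitely many sections $\Gamma_1,\dots,\Gamma_r$ separately.

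First, since $E$ is a holomorphic vector bundle of rank $r$ over $B$, after shrinking $B$ about $0$ we fix a holomorphic trivialisation $E\simeq B\times\C^r$ and a holomorphic frame $e_1,\dots,e_r\in H^0(B,\,E)$. In this trivialisation the given $p$-contact basis $\{\Gamma_1^0,\dots,\Gamma_r^0\}$ of $E_0$ corresponds to vectors $v_1,\dots,v_r\in\C^r$. We define $\Gamma_j\in H^0(B,\,E)$ as the constant section $t\mapsto v_j$, that is, $\Gamma_j:=\sum_k v_j^k\,e_k$ with $\Gamma_j^0=\sum_k v_j^k\,e_k(0)$. These sections are holomorphic and satisfy $\Gamma_j(0)=\Gamma_j^0$, which is (i).

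Second, the frame property. The matrix $(v_j^k)$ is invertible because $\{\Gamma_j^0\}$ is a basis of $E_0$. Since $(e_k(t))_k$ is a basis of $E_t$ for every $t$, the elements $\Gamma_1(t),\dots,\Gamma_r(t)$ are linearly independent for every $t\in B$. Hence $\{\Gamma_j\}$ is a holomorphic frame of $E$ on all of $B$. (Equivalently, the determinant of the coefficients of $\Gamma_j(t)$ in any local frame is nonzero at $0$, hence near $0$ by continuity.)

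Third, property (ii). For each fixed $j$, the section $\Gamma_j$ plays exactly the role of the section $\Gamma$ in the proof of Theorem \ref{The:deformation-openness_H1}. We identify ${\cal X}\simeq B\times X$ via Ehresmann. The map $(t,\,z)\mapsto\Gamma_j(t)(z)\wedge(\partial_t\Gamma_j(t))(z)$ is continuous and is nonzero at $t=0$ for every $z\in X$, because $\Gamma_j^0$ is $p$-contact. By compactness of $X$ there is a neighbourhood $B_j$ of $0$ on which $\Gamma_j(t)$ is $p$-contact on all of $X_t$. We then replace $B$ by $\bigcap_{j=1}^r B_j$, which is still a neighbourhood of $0$ because the intersection is finite, and shrink it further to a ball if desired.

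The only point needing care is the continuity of $(t,\,z)\mapsto\Gamma_j(t)\wedge\partial_t\Gamma_j(t)$ jointly in $(t,\,z)$, as a section of $L_t^2$-valued forms. This continuity was already used in Theorem \ref{The:deformation-openness_H1}. It follows from the $C^\infty$ dependence on $t$ of holomorphic sections of $E$ viewed as forms on $X_t$, by the Kodaira--Spencer description recalled in $\S$\ref{subsection:Hp0_line-bundle}. One also uses that $\Gamma\wedge\partial\Gamma=\Gamma\wedge D'_{h_t}\Gamma$ for a fixed smooth metric $h$ on ${\cal L}$, which varies smoothly with $t$. I expect no real obstacle beyond this.
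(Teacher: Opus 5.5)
Your proposal is correct and follows essentially the paper's route. You extend each $\Gamma_j^0$ to a holomorphic section of $E$, run the continuity-and-compactness argument of Theorem \ref{The:deformation-openness_H1} for each $j$, and intersect the finitely many resulting neighbourhoods of $0$. The only, harmless, difference is that you choose the extensions coherently as constant sections in a holomorphic trivialisation. This gives the frame property on all of $B$ at once, whereas the paper obtains it by shrinking $B$ further.
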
  

\noindent {\it Proof.} It suffices to apply Theorem \ref{The:deformation-openness_H1} to each $\Gamma_j^0$ of the $p$-contact basis of $E_0$ given by Corollary \ref{Cor:p-contact_basis}. The set $\{\Gamma_1(t),\dots , \Gamma_r(t)\}$ remains a $\C$-basis of $E_t$ for every $t\in B$ if $B$ has been further shrunk enough about $0$.   \hfill $\Box$

\section{Curvature of relative holomorphic $p$-contact bundles}\label{section:curvature_families_p-contact-bundles}

We start with a few generalities on the holomorphic vector bundle $E:={\cal H}^{p,\,0}({\cal L})\longrightarrow B$ constructed in $\S$\ref{subsection:Hp0_line-bundle} under the assumption $(H1)$.

Let $\omega$ be a Hermitian metric on ${\cal X}$ and let $h$ be a $C^\infty$ Hermitian fibre metric on ${\cal L}\longrightarrow{\cal X}$. Then, for every $t\in B$, the restrictions $\omega_t:=\omega_{|X_t}$ and $h_t:=h_{|L_t}$ are a Hermitian metric on $X_t$, respectively a $C^\infty$ Hermitian fibre metric on $L_t$. Recall that we set $n=2p+1 = \mbox{dim}_\C X_t$ for every $t\in B$.

\begin{Lem}\label{Lem:coupling_inner-prod_X} Let $(L,\,h)$ be a Hermitian holomorphic line bundle over an $n$-dimensional complex Hermitian manifold $(X,\,\omega)$. Let $p\in\{0,\dots , n\}$. Then, for  all $\Gamma,\Omega\in C^\infty_{p,\,0}(X,\,L)$, the following identity of real scalar-valued forms holds everywhere on $X$: \begin{eqnarray}\label{eqn:coupling_inner-prod_X}i^{p^2}\,\{\Gamma,\, \Omega\}_h\wedge\omega_{n-p} = \langle\Gamma,\, \Omega\rangle_{\omega,\,h}\,dV_{\omega}.\end{eqnarray}

\end{Lem}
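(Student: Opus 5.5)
The identity (\ref{eqn:coupling_inner-prod_X}) is pointwise and both sides are $\C$-sesquilinear in $(\Gamma,\,\Omega)$. I will therefore fix a point $x\in X$ and reduce to a computation in linear algebra on $T_xX$.

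The first step is to get rid of the bundle. Choose a unit vector $e$ of $L_x$ for $h$, and write $\Gamma=\gamma\otimes e$ and $\Omega=\eta\otimes e$ at $x$, with $\gamma,\eta\in\Lambda^{p,\,0}T^\star_xX$. By definition of the pairing $\{\cdot\,,\,\cdot\}_h$ (wedge product of the form parts combined with $h$ on the bundle parts), we have $\{\Gamma,\,\Omega\}_h=\gamma\wedge\bar\eta\,|e|_h^2=\gamma\wedge\bar\eta$. Likewise, $\langle\Gamma,\,\Omega\rangle_{\omega,\,h}=\langle\gamma,\,\eta\rangle_\omega$. It thus suffices to prove the scalar identity $i^{p^2}\gamma\wedge\bar\eta\wedge\omega_{n-p}=\langle\gamma,\,\eta\rangle_\omega\,dV_\omega$ for $(p,\,0)$-forms at one point. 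Here I take the usual conventions $\omega_k=\omega^k/k!$ and $dV_\omega=\omega_n$.

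Next, I choose coordinates at $x$ with $\omega=i\sum_j dz_j\wedge d\bar z_j$. By sesquilinearity, it is enough to treat basis elements $\gamma=dz_I$ and $\eta=dz_J$ with $|I|=|J|=p$. If $I\neq J$, some index of $J$ lies outside $I$. Every monomial of $\omega_{n-p}$ is of the form $\prod_{k\in K}(i\,dz_k\wedge d\bar z_k)$ with $|K|=n-p$, so completing $dz_I\wedge d\bar z_J$ to a full $(n,\,n)$-form forces $K=\complement I=\complement J$. This is impossible, so the left-hand side vanishes, and so does $\langle dz_I,\,dz_J\rangle=0$. If $I=J$, only $K=\complement I$ contributes, and $\omega_{n-p}$ contributes exactly $\prod_{k\notin I}(i\,dz_k\wedge d\bar z_k)$. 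We are then left to check that $i^{p^2}dz_I\wedge d\bar z_I=\prod_{k\in I}(i\,dz_k\wedge d\bar z_k)$; multiplying by the factors over $\complement I$, which are even and commute, gives $\omega_n=dV_\omega$, which matches $|dz_I|^2_\omega=1$.

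The main obstacle is this sign identity. Reordering $dz_{i_1}\wedge\dots\wedge dz_{i_p}\wedge d\bar z_{i_1}\wedge\dots\wedge d\bar z_{i_p}$ into the paired form $\prod_k(dz_{i_k}\wedge d\bar z_{i_k})$ costs the sign $(-1)^{p(p-1)/2}$. So we need $i^{p^2}(-1)^{p(p-1)/2}=i^p$, that is $i^{p^2-p}\,i^{p(p-1)}=i^{2p(p-1)}=1$. This holds because $p(p-1)$ is even. Finally, the left-hand side is real whenever $\Gamma=\Omega$, which is consistent with the positivity of $\langle\Gamma,\,\Gamma\rangle\,dV_\omega$.
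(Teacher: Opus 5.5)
Your proof is correct and follows the same route as the paper's: a pointwise computation in holomorphic coordinates in which $\omega$ is the standard form $\sum_j i\,dz_j\wedge d\bar z_j$ at the chosen point, with the bundle factor handled through a frame of $L$. You also supply two details the paper leaves implicit, namely the vanishing of the off-diagonal terms $I\neq J$ and the sign identity $i^{p^2}(-1)^{p(p-1)/2}=i^p$.
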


\noindent {\it Proof.} In local holomorphic coordinates $z_1,\dots , z_n$ of $X$ and a local holomorphic frame $\{e\}$ of $L$, we have $\Gamma = \sum_{|I|=p}\Gamma_I\,dz_I\otimes e$ and $\Omega = \sum_{|I|=p}\Omega_I\,dz_I\otimes e$ for some $C^\infty$ locally defined functions $\Gamma_I, \Omega_I$.

We get: \begin{eqnarray*}i^{p^2}\,\{\Gamma,\, \Omega\}_h = \sum\limits_{|I| = |J| =p}i^{p^2}\,\Gamma_I\,\overline\Omega_J\,dz_I\wedge d\bar{z}_J\,e^{-\varphi},\end{eqnarray*} where $\varphi$ is the local weight of the fibre metric $h$ in the chosen local trivialisation of $L$ (i.e. $|e|_h^2 = e^{-\varphi}$).

Now, fix a point $x_0\in X$ and choose the coordinates $z_1,\dots , z_n$ centred at $x_0$ such that $\omega$ is given by the identity matrix at $x_0$ in these coordinates (i.e. $\omega(x_0) = \sum_{1\leq j\leq n} idz_j\wedge d\bar{z}_j$). At $x_0$, we get: \begin{eqnarray*}i^{p^2}\,\{\Gamma,\, \Omega\}_h\wedge\omega_{n-p} = \bigg(\sum_{|I|=p}\Gamma_I\,\overline\Omega_I\,e^{-\varphi}\bigg)\,dV_n = \langle\Gamma,\, \Omega\rangle_{\omega,\,h}\,dV_n,\end{eqnarray*} where $dV_n:=idz_1\wedge d\bar{z}_1\wedge\dots\wedge idz_n\wedge d\bar{z}_n$. Since $dV_n = dV_\omega$ at $x_0$, (\ref{eqn:coupling_inner-prod_X}) follows. \hfill $\Box$

\begin{Rem}\label{Rem:L2_inner-product_p=n} Note that, when $p=n$, the left-hand side of (\ref{eqn:coupling_inner-prod_X}) is independent of $\omega$. Hence, so is the right-hand side, despite its ostensible dependence on $\omega$. 

\end{Rem}

\vspace{2ex}

Applying Lemma \ref{Lem:coupling_inner-prod_X} in our setting to each fibre $(L_t,\,h_t)\longrightarrow(X_t,\,\omega_t)$, we get the following expression for the $L^2$-inner product on $C^\infty_{p,\,0}(X_t,\,L_t)$ induced by the metrics $(\omega_t,\,h_t)$: \begin{eqnarray}\label{eqn:L2_inner-product_expression}\langle\langle\Gamma_t,\, \Omega_t\rangle\rangle_{\omega_t,\,h_t}:=\int\limits_{X_t}\langle\Gamma_t,\, \Omega_t\rangle_{\omega_t,\,h_t}\,dV_{\omega_t} = \int\limits_{X_t} i^{p^2}\,\{\Gamma_t,\, \Omega_t\}_{h_t}\wedge(\omega_t)_{n-p}, \hspace{5ex} t\in B,\end{eqnarray} for all $\Gamma_t, \Omega_t\in C^\infty_{p,\,0}(X_t,\,L_t)$.

\begin{Def}\label{Def:h_E} Let $(\omega,\,h)$ be the pair of a Hermitian metric on ${\cal X}$ and a $C^\infty$ Hermitian fibre metric on ${\cal L}\longrightarrow{\cal X}$. We associate with it the $C^\infty$ Hermitian fibre metric $h_E$ on $E:={\cal H}^{p,\,0}({\cal L})\longrightarrow B$ defined by $B\ni t\longmapsto\langle\langle\,\cdot\,,\, \,\cdot\,\rangle\rangle_{\omega_t,\,h_t}$, where $\langle\langle\,\cdot\,,\, \,\cdot\,\rangle\rangle_{\omega_t,\,h_t}$ is the $L^2_{\omega_t,\,h_t}$-inner product of (\ref{eqn:L2_inner-product_expression}).

\end{Def}

\vspace{2ex}

In our $p$-contact setting, once we have fixed a pair of metrics $(\omega,\,h)$ as in Definition \ref{Def:h_E}, we can reinforce Corollary \ref{Cor:p-contact_frame}.

\begin{Cor}\label{Cor:p-contact-parallel_frame} The setting is the same as in Corollary \ref{Cor:p-contact_frame}. We fix, in addition, a pair of metrics $(\omega,\,h)$ as in Definition \ref{Def:h_E}. Let $D_E = D_E' + D_E''$ be the Chern connection of $(E,\,h_E)$, where $h_E$ is the $C^\infty$ Hermitian fibre metric on $E$ induced by $(\omega,\,h)$.

  Then, after possibly shrinking $B$ about $0$, the {\bf holomorphic $p$-contact frame} $\{\Gamma_1,\dots , \Gamma_r\}$ of $E$ given by Corollary \ref{Cor:p-contact_frame} can be chosen such that, in addition, it is {\bf $D_E$-parallel} at $t=0$, namely \begin{eqnarray}\label{eqn:D_E_paralell}(D_E'\Gamma_j)(0) = 0, \hspace{5ex} j=1,\dots , r.\end{eqnarray}
  
\end{Cor}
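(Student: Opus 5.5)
We start from the holomorphic $p$-contact frame $\{\Gamma_1,\dots,\Gamma_r\}$ of $E$ given by Corollary \ref{Cor:p-contact_frame} and correct it by a holomorphic change of frame that leaves the values at $t=0$ unchanged. The correction is the usual construction of a frame that is normal at a point, and we must check that it preserves the $p$-contact property near $0$.

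Let $t=(t_1,\dots,t_N)$ be the coordinates on $B$. Write $D_E'\Gamma_j=\sum_{k}\sum_{\lambda=1}^N a_{jk}^\lambda(t)\,dt_\lambda\otimes\Gamma_k$, where the connection matrix of the Chern connection of $(E,\,h_E)$ in the holomorphic frame $\{\Gamma_j\}$ is $\theta=\partial H\cdot H^{-1}$ (in an appropriate index convention), with $H=(h_E(\Gamma_j,\Gamma_k))$. Its coefficients $a_{jk}^\lambda$ are $C^\infty$ functions on $B$. Define new holomorphic sections
\begin{eqnarray*}\widetilde\Gamma_j(t):=\Gamma_j(t)-\sum_{k=1}^r\sum_{\lambda=1}^N a_{jk}^\lambda(0)\,t_\lambda\,\Gamma_k(t),\hspace{5ex} j=1,\dots,r.\end{eqnarray*}
They are holomorphic because the coefficients $a_{jk}^\lambda(0)$ are constants. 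We have $\widetilde\Gamma_j(0)=\Gamma_j(0)=\Gamma_j^0$, so property (i) of Corollary \ref{Cor:p-contact_frame} is kept.

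Compute at $t=0$ using the Leibniz rule $D_E'(f\sigma)=\partial f\otimes\sigma+fD_E'\sigma$:
\begin{eqnarray*}(D_E'\widetilde\Gamma_j)(0)=(D_E'\Gamma_j)(0)-\sum_{k,\lambda}a_{jk}^\lambda(0)\,dt_\lambda\otimes\Gamma_k(0)=0.\end{eqnarray*}
The terms containing $t_\lambda\,D_E'\Gamma_k$ vanish at $0$. This gives (\ref{eqn:D_E_paralell}). The family $\{\widetilde\Gamma_j(t)\}$ is a basis at $t=0$, so by continuity of the determinant it remains a frame of $E$ after shrinking $B$.

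The only real point is property (ii): each $\widetilde\Gamma_j(t)$ must be $p$-contact for all $t$ in the shrunk ball. We argue exactly as in the proof of Theorem \ref{The:deformation-openness_H1}. The section $\widetilde\Gamma_j$ is a holomorphic section of $E$ whose value at $0$ is the $p$-contact form $\Gamma_j^0$. Under the Ehresmann trivialisation ${\cal X}\simeq B\times X$, the map $(t,z)\mapsto\widetilde\Gamma_j(t)(z)\wedge(\partial_t\widetilde\Gamma_j(t))(z)$ is continuous. Compactness of $X$ then yields a neighbourhood of $0$ on which this map is nowhere zero. In fact we can apply Theorem \ref{The:deformation-openness_H1}, whose proof works for an arbitrary holomorphic extension of $\Gamma_j^0$, directly to $\widetilde\Gamma_j$. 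Intersecting the finitely many neighbourhoods obtained for $j=1,\dots,r$, together with the one on which the frame property holds, and shrinking $B$ to a ball inside this intersection, completes the argument.

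The step needing care is this openness argument. It requires the smooth dependence of $\widetilde\Gamma_j(t)$ on $t$ as forms on $X$, which holds because $E$ is a holomorphic, hence $C^\infty$, subbundle in the Kodaira--Spencer sense recalled in $\S$\ref{subsection:Hp0_line-bundle}.
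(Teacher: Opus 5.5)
Your proof is correct and is essentially the paper's: the paper also sets $\widetilde\Gamma_j=\sum_k\Gamma_k M_{kj}$ for a holomorphic matrix function $M$ with $M(0)=\mbox{Id}_r$ and $(\partial M)(0)=-\theta(0)$, and your explicit linear choice $M_{kj}(t)=\delta_{kj}-\sum_\lambda a^\lambda_{jk}(0)\,t_\lambda$ is one such $M$. The paper then keeps the frame and $p$-contact properties near $0$ by the same continuity argument as in Theorem \ref{The:deformation-openness_H1}, exactly as you do.
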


\noindent {\it Proof.} Since each $\Gamma_j$ is holomorphic, we automatically have $D_E''\Gamma_j = 0$ everywhere on $B$ for every $j=1,\dots , r$. Thus, once we have proved (\ref{eqn:D_E_paralell}), we will have $(D_E\Gamma_j)(0) = 0$ for all $j$.

Let $\{\Gamma_1,\dots , \Gamma_r\}$ be the holomorphic $p$-contact frame of $E$ 
given by Corollary \ref{Cor:p-contact_frame}. Then, there exist $C^\infty$ $\C$-valued $(1,\,0)$-forms $\theta_{kj}$ on $B$ such that \begin{eqnarray*}D_E'\Gamma_j = \sum\limits_{k=1}^r\theta_{kj}\wedge\Gamma_k  \hspace{5ex} \mbox{on}\hspace{2ex} B.\end{eqnarray*} Set $\theta:=(\theta_{kj})_{1\leq j,k\leq r}$.

Consider a {\it holomorphic} matrix-valued function $M = (M_{kj})_{1\leq k,j\leq r}:B\longrightarrow GL(r,\,\C)$ such that \begin{eqnarray*}M(0) = \mbox{Id}_r \hspace{3ex}\mbox{and}\hspace{3ex} (\partial M)(0) = - \theta(0).\end{eqnarray*} Set $\widetilde\Gamma_j:=\sum\limits_{k=1}^r\Gamma_k\,M_{kj}$ for $j=1,\dots , r$. Then $\widetilde\Gamma_j(0) = \Gamma_j(0)$ for $j=1,\dots , r$.

On the other hand, from $D_E'\widetilde\Gamma_j = \sum\limits_{k=1}^r (D_E'\Gamma_k)\,M_{kj} + \sum\limits_{k=1}^r \Gamma_k\wedge \partial M_{kj}$ (with $j=1,\dots , r$), we get: \begin{eqnarray*}(D_E'\widetilde\Gamma_j)(0) = (D_E'\Gamma_j)(0) - \sum\limits_{k=1}^r \Gamma_k(0)\wedge\theta_{kj}(0) = (D_E'\Gamma_j)(0) - (D_E'\Gamma_j)(0) = 0, \hspace{5ex} j=1,\dots , r.\end{eqnarray*} Note that each $\Gamma_k$ is viewed as a section (i.e. a $(0,\,0)$-form) of $E$ over $B$, not as an $L_t$-valued $(p,\,0)$-form on $X_t$, so $\Gamma_k(0)\wedge\theta_{kj}(0) = \theta_{kj}(0)\wedge\Gamma_k(0)$ and the symbol $\wedge$ can even be suppressed here.

Thus, after replacing the $\Gamma_j$'s by the $\widetilde\Gamma_j$'s, we get (\ref{eqn:D_E_paralell}). Moreover, since $\widetilde\Gamma_j(0) = \Gamma_j(0)$ for every $j$, since $\{\Gamma_1(0),\dots , \Gamma_r(0)\}$ is a $p$-contact basis of $E_0$ and since each $\widetilde\Gamma_j$ varies holomorphically (hence continuously) with $t\in B$, the family $\{\widetilde\Gamma_1(t),\dots , \widetilde\Gamma_r(t)\}$ remains a $p$-contact basis of $E_t$ for all $t$ sufficiently close to $0$.    \hfill $\Box$

\subsection{Curvature of ${\cal H}^{p,\,0}({\cal L})$ over a compact base}\label{subsection:Hp0_curvature_compact-base}

A corollary of the Kobayashi-style vanishing theorem 2.3 in [Pop26] is the following

\begin{Prop}\label{Prop:Hp0_non-negative_p-contact} Let $\pi:{\cal X}\longrightarrow{\cal Y}$ be a proper holomorphic submersion between complex manifolds and let ${\cal L}$ be a holomorphic line bundle over ${\cal X}$. Suppose that:

\vspace{1ex}

(a)\, ${\cal Y}$ is {\bf compact} of complex dimension $N$;

\vspace{1ex}

(b)\, $\mbox{dim}_\C H^{p,\,0}_{\bar\partial_t}(X_t,\,L_t)$ is independent of $t\in{\cal Y}$;

\vspace{1ex}

(c)\, there exists a non-zero global holomorphic section $\Gamma\in H^0({\cal Y},\,{\cal H}^{p,\,0}({\cal L}))\setminus\{0\}$. 

\vspace{1ex}

Then, there exists {\bf no} $C^\infty$ Hermitian fibre metric $h$ on ${\cal H}^{p,\,0}({\cal L})$ whose Chern curvature form $i\Theta_h({\cal H}^{p,\,0}({\cal L}))$ satisfies, for some Gauduchon metric $\rho$ on ${\cal Y}$:

\vspace{1ex}

(i)\, the inequality: \begin{eqnarray}\label{eqn:negarive-curvature_Hp0L}\bigg\{\bigg(i\Theta_h({\cal H}^{p,\,0}({\cal L}))\wedge\rho^{N-1}\bigg)\,u,\,u\bigg\}_h\leq 0\end{eqnarray} for every $y\in{\cal Y}$ and every $u\in {\cal H}^{p,\,0}({\cal L})_y$;

\vspace{1ex}

(ii)\, and the strict inequality $\{(i\Theta_h({\cal H}^{p,\,0}({\cal L}))\wedge\rho^{N-1})\,u,\,u\}_h< 0$ for some $y_0\in{\cal Y}$ and every $u\in {\cal H}^{p,\,0}({\cal L})_{y_0}\setminus\{0\}$.

\end{Prop}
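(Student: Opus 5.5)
Assume, for a contradiction, that a $C^\infty$ Hermitian fibre metric $h$ on $E:={\cal H}^{p,\,0}({\cal L})$ satisfies (i) and (ii) for some Gauduchon metric $\rho$ on ${\cal Y}$. Hypothesis (b) is $(H1)$ over ${\cal Y}$, so $E$ is a holomorphic vector bundle on the compact manifold ${\cal Y}$ (as in $\S$\ref{subsection:Hp0_line-bundle}). By (c) it carries a non-zero global holomorphic section $\Gamma$. The goal is to show that such a section cannot coexist with a Gauduchon-quasi-negative Chern curvature. This is exactly the setting of the Kobayashi-style vanishing theorem 2.3 of [Pop26], which I take to assert the following: on a compact complex manifold with a Gauduchon metric $\rho$, a Hermitian holomorphic vector bundle with $i\Theta\wedge\rho^{N-1}\leq 0$ everywhere and $<0$ at some point has no non-zero holomorphic sections. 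The proposition then follows at once by applying that theorem to $(E,\,h)$ and contradicting (c).

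To keep the argument self-contained, I would also sketch the mechanism. Set $f:=|\Gamma|_h^2$, a smooth non-negative function on ${\cal Y}$. Since $D_h''\Gamma=0$, the standard Bochner formula gives
\begin{eqnarray*}i\partial\bar\partial f = i\{D_h'\Gamma,\,D_h'\Gamma\}_h - \{i\Theta_h(E)\,\Gamma,\,\Gamma\}_h.\end{eqnarray*}
Wedging with $\rho^{N-1}$, the first term is a non-negative multiple of the volume form. By (i), the second term, with its minus sign, is also $\geq 0$. Hence $i\partial\bar\partial f\wedge\rho^{N-1}\geq 0$ on ${\cal Y}$.

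The Gauduchon condition $\partial\bar\partial\rho^{N-1}=0$ and Stokes's theorem on the compact manifold ${\cal Y}$ give $\int_{\cal Y}i\partial\bar\partial f\wedge\rho^{N-1}=0$. Therefore the pointwise non-negative integrand vanishes identically. This forces $D_h'\Gamma\equiv 0$, so $\Gamma$ is $D_h$-parallel and $f$ is a non-zero constant; in particular $\Gamma(y_0)\neq 0$. It also forces $\{(i\Theta_h(E)\wedge\rho^{N-1})\Gamma,\,\Gamma\}_h\equiv 0$. At $y_0$, taking $u=\Gamma(y_0)\neq 0$ in (ii) gives strict negativity, which is a contradiction.

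The one step that needs care is the sign and normalisation in the Bochner identity and in the quantity of (\ref{eqn:negarive-curvature_Hp0L}). One must check that (i) enters the identity for $i\partial\bar\partial f\wedge\rho^{N-1}$ with the sign that makes that form non-negative. This only requires matching the paper's convention for $\{\cdot,\cdot\}_h$ on $E$-valued forms; I expect it to be routine, and in any case it is handled inside [Pop26, Theorem 2.3].
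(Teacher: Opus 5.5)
Your proposal is correct and follows essentially the same route as the paper: it applies the Bochner identity for $|\Gamma|^2_h$ wedged with $\rho^{N-1}$, then uses the Gauduchon condition with Stokes (the paper's alternative to the maximum principle) to force the non-negative integrand to vanish, and finally reaches a contradiction with (ii) at $y_0$ once $|\Gamma|^2_h$ is known to be a non-zero constant. Your extraction of $D_h'\Gamma\equiv 0$ to obtain the constancy of $|\Gamma|^2_h$ usefully makes explicit a step the paper leaves to the maximum principle; note that both arguments tacitly assume ${\cal Y}$ is connected.
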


\vspace{2ex}

Note that hypothesis (c) is satisfied if we make instead the following hypothesis: \begin{eqnarray}\label{hypothesis:Hp0_non-negative_p-contact_hyp_c'}\nonumber & &  \mbox{\it there exists a global holomorphic section $\Gamma\in H^0({\cal Y},\,{\cal H}^{p,\,0}({\cal L}))$ and a point $y_0\in{\cal Y}$}\\
\nonumber (c') & &  \mbox{\it such that $\Gamma(y_0)\in H^{p,\,0}_{\bar\partial_{y_0}}(X_{y_0},\,L_{y_0})$ is an {\bf $L_{y_0}$-valued holomorphic $p$-contact structure}} \\
  & & \mbox{\it on $X_{y_0}:=\pi^{-1}(y_0)\subset{\cal X}$, where $L_{y_0}:= {\cal L}_{|X_{y_0}}$ and $p$ is odd with $\mbox{dim}_\C X_{y_0} = n = 2p+1$}.    \end{eqnarray}

\noindent {\it Proof of Proposition \ref{Prop:Hp0_non-negative_p-contact}.} It is very similar to the proof of Theorem 2.3 in [Pop26]. For the convenience of the reader, we spell out the details. We reason by contradiction. Suppose there exists a $C^\infty$ Hermitian fibre metric $h$ on ${\cal H}^{p,\,0}({\cal L})$  whose Chern curvature form satisfies the property in the statement. Let $D_h = D_h' + \bar\partial$ be the Chern connection of $({\cal H}^{p,\,0}({\cal L}),\,h)$.

Direct computations yield: \begin{eqnarray*}\partial\{\Gamma,\,\Gamma\}_h = \{D_h'\Gamma,\,\Gamma\}_h + \{\Gamma,\,\bar\partial \Gamma\}_h = \{D_h'\Gamma,\,\Gamma\}_h,\end{eqnarray*} since $\bar\partial \Gamma = 0$. Taking $\bar\partial$, we further get: \begin{eqnarray*}\bar\partial\partial\{\Gamma,\,\Gamma\}_h = \{\bar\partial D_h'\Gamma,\,\Gamma\}_h - \{D_h'\Gamma,\,D_h'\Gamma\}_h = \{\Theta_h({\cal H}^{p,\,0}({\cal L}))\Gamma,\,\Gamma\}_h - \{D_h'\Gamma,\,D_h'\Gamma\}_h,\end{eqnarray*} where we used again the fact that $\bar\partial \Gamma = 0$ to get the curvature term. Hence: \begin{eqnarray*}(N-1)\,\bigg(\Delta_{\rho}|\Gamma|^2_h\bigg)\,dV_\rho & = & i\bar\partial\partial\{\Gamma,\,\Gamma\}_h\wedge\rho^{N-1} \\
  & = & \bigg\{\bigg(i\Theta_h({\cal H}^{p,\,0}({\cal L}))\wedge\rho^{N-1}\bigg)\Gamma,\,\Gamma\bigg\}_h - i\,\{D_h'\Gamma,\,D_h'\Gamma\}_h\wedge\rho^{N-1} \leq 0\end{eqnarray*} at every point of ${\cal Y}$. By $\Delta_{\rho}:=\Lambda_{\rho}(i\bar\partial\partial)$ we denote the standard Laplacian on functions induced by $\rho$. We have also used the fact that, given any $N$-dimensional complex manifold $X$ and any Hermitian holomorphic vector bundle $(E,\,h)\longrightarrow X$, for every $\eta\in C^\infty_{1,\,0}(X,\,E)$, the $\R$-valued $C^\infty$ $(N,\,N)$-form $i\,\{\eta,\,\eta\}_h\wedge\rho^{N-1}$ is $\geq 0$ at every point of $X$. (See e.g.  Lemma 2.9. of [Pop25].)

  Since $\rho$ is a Gauduchon metric, the adjoint of the Laplacian $\Delta_{\rho}$ has no zero$^{th}$-order terms. Thus, ${\cal Y}$ being compact, the maximum principle implies that \begin{eqnarray*}\Delta_{\rho}|\Gamma|^2_h\equiv 0 \hspace{3ex}\mbox{and}\hspace{3ex} |\Gamma|^2_h = Const \hspace{1ex} \mbox{on}\hspace{1ex} {\cal Y}.\end{eqnarray*}

  Alternatively, one can see directly that \begin{eqnarray*}\int\limits_{{\cal Y}}\bigg(\Delta_{\rho}|\Gamma|^2_h\bigg)\,dV_\rho =  \frac{1}{N-1}\,\int\limits_{{\cal Y}}i\bar\partial\partial\{\Gamma,\,\Gamma\}_h\wedge\rho^{N-1} = 0\end{eqnarray*} by the compactness of ${\cal Y}$, the Gauduchon property $i\bar\partial\partial\rho^{N-1} = 0$ and the Stokes theorem. As we have seen above, $\Delta_{\rho}|\Gamma|^2_h\leq 0$ everywhere on ${\cal Y}$, so this immediately yields $\Delta_{\rho}|\Gamma|^2_h = 0$ everywhere. 

  Moreover, under our hypothesis, we would have $\Delta_{\rho}|\Gamma|^2_h<0$ at $y_0\in{\cal Y}$ if $\Gamma(y_0)\neq 0$. We deduce that $\Gamma(y_0) = 0$, hence $\Gamma(y) = 0$ for every $y\in{\cal Y}$. This contradicts hypothesis (c) of the statement. \hfill $\Box$

\subsection{Curvature of ${\cal L}$ in the presence of a holomorphic $p$-contact structure}\label{subsection:L_curvature_p-contact}

We proved in [BP25] that a $p$-contact bundle $L$ cannot be semi-negative, even with respect to a possibly singular Hermitian fibre metric and even in a generalised, weaker, sense (called $(n-p)$-semi-negativity) on a compact K\"ahler manifold.  

\begin{The}(Theorem 3.5 of [BP25])\label{The:p-contact_n-p-pos} Let $L$ be a holomorphic line bundle on a compact {\bf K\"ahler} manifold $(X,\,\omega)$ with $\mbox{dim}_\C X = n = 2p+1$ and $p$ odd.

  If there exists an {\bf $L$-valued holomorphic $p$-contact structure} $\Gamma\in H^{p,\,0}_{\bar\partial}(X,\,L)$, there exists {\bf no} (singular or otherwise) Hermitian fibre metric $h^{-1}$ on the dual line bundle $L^{-1}$ (hence no such metric $h^{-2}$ on the canonical bundle $K_X\simeq L^{-2}$ of $X$) such that \begin{eqnarray*}i\Theta_{h^{-1}}(L^{-1})\geq_{n-p,\,\omega}0  \hspace{5ex} (\mbox{or equivalently} \hspace{3ex} i\Theta_{h^{-2}}(K_X)\geq_{n-p,\,\omega}0).\end{eqnarray*}

\end{The}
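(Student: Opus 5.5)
We argue by contradiction, in the spirit of the vanishing argument for Proposition \ref{Prop:Hp0_non-negative_p-contact}, but on $X$ itself and using the holomorphic $(n,\,0)$-form $\Gamma\wedge\partial\Gamma$. The first step is the observation from the introduction (cf. $\S2.4$ of [BP25]): $\Gamma\wedge\partial\Gamma$ is a nowhere-vanishing holomorphic section of $K_X\otimes L^2$. It therefore trivialises $K_X\otimes L^2$, so $L^{-2}\simeq K_X$. Any (possibly singular) metric $h^{-1}$ on $L^{-1}$ induces $h^{-2}$ on $K_X$ with $i\Theta_{h^{-2}}(K_X)=2\,i\Theta_{h^{-1}}(L^{-1})$. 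Hence the two formulations in the statement are equivalent, and it suffices to rule out $i\Theta_{h^{-1}}(L^{-1})\geq_{n-p,\,\omega}0$. Here $\geq_{n-p,\,\omega}$ means that the curvature, wedged with $\omega^{n-p-1}$ (equivalently, traced on $(n-p)$-dimensional subspaces), is a nonnegative current.

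Suppose such an $h^{-1}$ exists, with dual metric $h$ on $L$ (local weight $\varphi$, $|e|^2_h=e^{-\varphi}$). Then $i\partial\bar\partial\varphi$ is $(n-p)$-semi-negative. We consider the real $(p,\,p)$-form $i^{p^2}\{\Gamma,\,\Gamma\}_h$, which is $\geq 0$ (strongly positive, being decomposable of the form $i^{p^2}\alpha\wedge\bar\alpha\,e^{-\varphi}$). We compute $i\partial\bar\partial$ of it exactly as in the proof of Proposition \ref{Prop:Hp0_non-negative_p-contact}, using $\bar\partial\Gamma=0$. This gives
\begin{eqnarray*}
i\partial\bar\partial\bigl(i^{p^2}\{\Gamma,\,\Gamma\}_h\bigr)=i^{p^2}\,i\{D_h'\Gamma,\,D_h'\Gamma\}_h\cdot(\pm1)\;-\;i\Theta_h(L)\wedge i^{p^2}\{\Gamma,\,\Gamma\}_h .
\end{eqnarray*}
We wedge with $\omega^{n-p-1}$ and integrate over $X$. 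Since $\omega$ is Kähler, $d\omega^{n-p-1}=0$, so the left side integrates to $0$ by Stokes; in the singular case we regularise, or interpret the integral in the sense of currents. The curvature term satisfies $-i\Theta_h(L)=i\Theta_{h^{-1}}(L^{-1})$, and it is $\geq0$ when paired with the positive form $i^{p^2}\{\Gamma,\Gamma\}_h\wedge\omega^{n-p-1}$. This is exactly what $(n-p)$-semi-positivity of $L^{-1}$ is designed to give.

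The gradient term, with the correct sign convention, is a nonnegative multiple of $|D_h'\Gamma|^2$ minus its primitive-type components. This is the delicate point. What we actually want to extract is the nondegeneracy $\Gamma\wedge D_h'\Gamma\neq0$. Here we use Lemma \ref{Lem:coupling_inner-prod_X} in degree $p+1$: the term $i^{(p+1)^2}\{D_h'\Gamma,\,D_h'\Gamma\}_h\wedge\omega_{n-p-1}$ equals $|D_h'\Gamma|^2\,dV_\omega\geq0$. Both integrands therefore have the same sign, and their sum integrates to zero, so both vanish identically. In particular $D_h'\Gamma\equiv0$, hence $\Gamma\wedge\partial\Gamma=\Gamma\wedge D_h'\Gamma\equiv0$. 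This contradicts condition (b) of Definition \ref{Def:hol_p-contact_bundle}.

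The main obstacle is the sign bookkeeping in the Bochner-type identity. We must check that the factors $i^{p^2}$ and $i^{(p+1)^2}$, together with $p$ odd, make the gradient term and the curvature term contribute with the same sign after wedging with $\omega^{n-p-1}$. This is where the parity of $p$ should enter. A second obstacle is the singular metric case. There we approximate $h^{-1}$ by smooth metrics (Demailly regularisation) with a small loss $-\varepsilon\omega$ in the curvature, run the argument for each, and pass to the limit. Alternatively, we observe that $e^{-\varphi}|\Gamma|^2$ is locally integrable and interpret the identity in the sense of currents, using lower semicontinuity for the gradient term.
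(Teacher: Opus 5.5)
The paper does not prove this statement (it is quoted from [BP25]), so your argument has to stand on its own. For a \emph{smooth} metric it does. It is the fibrewise analogue of the proof of Proposition~\ref{Prop:Hp0_non-negative_p-contact}, with the closed form $\omega_{n-p-1}$ playing the role of $\rho^{N-1}$.

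The bookkeeping you postponed comes out as $i\partial\bar\partial\bigl(i^{p^2}\{\Gamma,\Gamma\}_h\bigr)=i\Theta_{h^{-1}}(L^{-1})\wedge i^{p^2}\{\Gamma,\Gamma\}_h+i^{(p+1)^2}\{D_h'\Gamma,D_h'\Gamma\}_h$, because $-(-1)^{p+1}\,i\cdot i^{p^2}=i^{(p+1)^2}$ for \emph{every} $p$. So the parity of $p$ plays no role in these signs. It enters only through $\Gamma\wedge\Gamma=0$, which gives $\Gamma\wedge\partial\Gamma=\Gamma\wedge D'_h\Gamma$.

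Wedge with $\omega_{n-p-1}$. The second term is exactly $|D'_h\Gamma|^2_{\omega,h}\,dV_\omega$ by Lemma~\ref{Lem:coupling_inner-prod_X} in degree $p+1$; there are no ``primitive components'' to worry about. At a point where $\omega$ is the identity and $i\Theta_{h^{-1}}(L^{-1})$ is diagonal with eigenvalues $\mu_j$, the first term equals $\sum_{|I|=p}\bigl(\sum_{j\notin I}\mu_j\bigr)|\Gamma_I|^2e^{-\varphi}\,dV_\omega\geq 0$.

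One correction: $i^{p^2}\{\Gamma,\Gamma\}_h$ is not strongly positive in general, since $\Gamma$ need not be decomposable. What makes the pairing nonnegative is that $i\Theta_{h^{-1}}(L^{-1})\wedge\omega_{n-p-1}=\sum_{|K|=n-p}\bigl(\sum_{j\in K}\mu_j\bigr)\bigwedge_{k\in K}i\,dz_k\wedge d\bar z_k$ is itself strongly positive.

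The genuine gap is the singular case, which the statement explicitly includes.

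\emph{Regularisation does not apply.} Demailly's regularisation theorem needs the curvature current to be bounded below by a continuous $(1,1)$-form. But $(n-p)$-semi-positivity with $n-p=p+1\geq 2$ only controls sums of $n-p$ eigenvalues and gives no lower bound on individual ones. The local weights are therefore merely quasi-$(n-p)$-plurisubharmonic, and the theorem does not apply. Even for quasi-psh weights, the loss of positivity is governed by Lelong numbers, not by a uniform $\varepsilon\omega$.

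\emph{The limit step is unspecified.} Each smooth approximant only yields $\|D'_{h_\varepsilon}\Gamma\|^2\leq (n-p)\,\varepsilon\,\|\Gamma\|^2_{h_\varepsilon}$, and you never say how this contradicts (b). The missing link is an $h$-independent estimate:
\begin{itemize}
\item $\Gamma\wedge\partial\Gamma$ is nowhere zero and does not depend on $h$, so the ratio $|\Gamma\wedge\partial\Gamma|_{\omega,h^2}/|\Gamma|^2_{\omega,h}$ is independent of $h$ and bounded below by some $c>0$;
\item pointwise, $|\Gamma\wedge D'_h\Gamma|_{\omega,h^2}\leq C\,|\Gamma|_{\omega,h}\,|D'_h\Gamma|_{\omega,h}$;
\item integrating gives $\|\Gamma\|_h\leq (C/c)\,\|D'_h\Gamma\|_h$ for every smooth $h$.
\end{itemize}
This rules out smooth metrics with $i\Theta_{h^{-1}}(L^{-1})+\varepsilon\omega\geq_{n-p,\,\omega}0$ once $\varepsilon$ is small.

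Even with this estimate, the singular case still needs an approximation of quasi-$(n-p)$-psh weights on a compact K\"ahler manifold by smooth ones with arbitrarily small loss, which you have not supplied. Your ``currents'' alternative does not avoid this. The products $e^{-\varphi}\,i\partial\bar\partial\varphi$ and $e^{-\varphi}|\partial\varphi|^2$ in the expanded identity have no a priori meaning for singular $\varphi$. As written, your proof covers smooth metrics only.
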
  

An immediate consequence in our relative setting is the following

\begin{Cor}\label{Cor:L_curvature_p-contact} Let $\pi:{\cal X}\longrightarrow{\cal Y}$ be a proper holomorphic submersion between complex manifolds and let ${\cal L}$ be a holomorphic line bundle over ${\cal X}$. Assume that the fibres $X_y:=\pi^{-1}(y)$, with $y\in{\cal Y}$, are (compact) {\bf K\"ahler} manifolds of complex dimension $n=2p+1$, with $p$ odd.

  Assume there exists $y_0\in{\cal Y}$ such that the restriction $L_{y_0}:= {\cal L}_{|X_{y_0}}$ is a {\bf $p$-contact bundle}. Then, there exists {\bf no} (singular or otherwise) Hermitian fibre metric $h$ on ${\cal L}$, whose restriction $h_{y_0}$ to $L_{y_0}$ is non-degenerate, such that \begin{eqnarray*}i\Theta_h({\cal L})\leq_{n-p,\,\omega}0   \hspace{5ex} \mbox{everywhere on}\hspace{1ex} {\cal X}.\end{eqnarray*}

\end{Cor}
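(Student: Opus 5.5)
We argue by contradiction and reduce to Theorem \ref{The:p-contact_n-p-pos}, applied on the single fibre $X_{y_0}$. Suppose such a metric $h$ on ${\cal L}$ exists, with $i\Theta_h({\cal L})\leq_{n-p,\,\omega}0$ everywhere on ${\cal X}$ and with non-degenerate restriction $h_{y_0}$ to $L_{y_0}$. Set $X:=X_{y_0}$, $L:=L_{y_0}$ and let $\omega_{y_0}$ be a Kähler metric on $X$; by hypothesis $L$ carries an $L$-valued holomorphic $p$-contact structure $\Gamma$. The strategy is to produce from $h$ a Hermitian fibre metric $h_{y_0}^{-1}$ on $L^{-1}$ with $i\Theta_{h_{y_0}^{-1}}(L^{-1})\geq_{n-p,\,\omega_{y_0}}0$, which Theorem \ref{The:p-contact_n-p-pos} forbids.

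The first step is restriction. Locally write $h=e^{-\varphi}$ on ${\cal X}$, so that $i\Theta_h({\cal L})=i\partial\bar\partial\varphi$ in the sense of currents. The restriction $h_{y_0}$ has local weight $\varphi_{|X}$; the non-degeneracy assumption is exactly what guarantees that $\varphi_{|X}$ is not identically $-\infty$ on any component of $X$, so $h_{y_0}$ is a genuine (possibly singular) metric on $L$. Its curvature satisfies $i\Theta_{h_{y_0}}(L)=i\partial\bar\partial(\varphi_{|X})=j^\star\, i\Theta_h({\cal L})$, where $j:X\hookrightarrow{\cal X}$ is the inclusion. The second step is to show that this pull-back still satisfies $\leq_{n-p,\,\omega_{y_0}}0$. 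For smooth $h$ this is pointwise linear algebra: I expect $(n-p)$-semi-negativity to be tested by wedging with positive $(n-p-1,\,n-p-1)$- or $(p,\,p)$-type forms built from $\omega$, or by $(n-p)$-fold traces over $(n-p)$-dimensional subspaces. Either test restricts to the complex subspace $T_xX\subset T_x{\cal X}$, with $\omega$ restricting to $\omega_{y_0}$. Here I take $\omega$ to be a Hermitian metric on ${\cal X}$ whose restriction to $X$ is Kähler, or else I read $\omega$ as fibrewise Kähler metrics, which matches the way the statement is phrased.

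The third step is dualisation. The metric $h_{y_0}^{-1}$ on $L^{-1}$ has curvature $-i\Theta_{h_{y_0}}(L)\geq_{n-p,\,\omega_{y_0}}0$. Combined with the existence of $\Gamma$, this contradicts Theorem \ref{The:p-contact_n-p-pos}.

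The main obstacle is the restriction step when $h$ is singular. Pulling back a closed current with a quasi-psh-type weight to a submanifold is legitimate only when the weight is not identically $-\infty$ there, which is the role of the non-degeneracy hypothesis. Checking that the $(n-p)$-semi-negativity inequality survives restriction in the current sense also needs care. I would first handle this by regularisation, approximating $\varphi$ locally by smooth weights $\varphi_\varepsilon$ with a controlled loss $\varepsilon\omega$ in the inequality and passing to the limit on $X$. If necessary, I would instead use the weak formulation by testing against smooth positive forms on $X$ extended to a neighbourhood in ${\cal X}$.
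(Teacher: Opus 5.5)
Your proposal is correct and follows the paper's route: restrict $h$ to $X_{y_0}$, observe that $i\Theta_{h_{y_0}}(L_{y_0})\leq_{n-p,\,\omega}0$ there, and contradict Theorem \ref{The:p-contact_n-p-pos} after dualising to $L_{y_0}^{-1}$. The paper does this in one line and takes for granted what you spell out: that the $(n-p)$-condition survives restriction via traces over $(n-p)$-dimensional subspaces, that $\omega_{|X_{y_0}}$ should be K\"ahler, and the care needed for singular $h$. One small correction: for such a semi-negatively curved weight, the degeneracy to rule out is $\varphi_{|X}\equiv+\infty$ (i.e.\ $h_{y_0}\equiv 0$), not $\varphi_{|X}\equiv-\infty$.
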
  

\noindent {\it Proof.} If a fibre metric $h$ as in the statement existed on ${\cal L}$, the Chern curvature form of $L_{y_0}$ with respect to $h_{y_0}$ would have the property \begin{eqnarray*}i\Theta_{h_{y_0}}(L_{y_0})\leq_{n-p,\,\omega}0   \hspace{5ex} \mbox{everywhere on}\hspace{1ex}  X_{y_0}.\end{eqnarray*}

Since $L_{y_0}:= {\cal L}_{|X_{y_0}}$ is a $p$-contact bundle on the compact K\"ahler manifold $X_{y_0}$, this would contradict Theorem \ref{The:p-contact_n-p-pos}.  \hfill $\Box$

\subsection{(Semi-)positivity of $E:={\cal H}^{p,\,0}({\cal L})$}\label{subsection:E_positivity} We now return to the setting of Corollary \ref{Cor:p-contact-parallel_frame}. In particular, we assume the existence of an $L_t$-valued holomorphic $p$-contact structure $\Gamma_t\in H^{p,\,0}_{\bar\partial_t}(X_t,\,L_t)$ on $X_t$ for every $t\in B$. 

We consider the holomorphic vector bundle associated with the following direct-image sheaf: \begin{eqnarray}\label{eqn:Hn0-L2_def}F={\cal H}^{n,\,0}({\cal L}^2):=\pi_\star\bigg(\Omega^n_{{\cal X}/B}\otimes{\cal L}^2\bigg),\end{eqnarray} where we use the same notation $F$ for the bundle and the sheaf. Under general circumstances, we would have to assume that the non-negative integer $\mbox{dim}_\C H^{n,\,0}(X_t,\,L_t^2)$ is independent of $t\in B$ (i.e. the analogue of the non-jumping hypothesis $(H1)$) in order to guarantee that the above direct-image sheaf is locally free. However, our $p$-contact hypothesis implies rather more: \begin{eqnarray}\label{eqn:F-fibres_dim_1}\mbox{dim}_\C H^{n,\,0}(X_t,\,L_t^2) = 1, \hspace{6ex} t\in B.\end{eqnarray} Indeed, if $\Gamma_t\in H^{p,\,0}_{\bar\partial_t}(X_t,\,L_t)$ is an $L_t$-valued holomorphic $p$-contact structure on $X_t$, then $\Gamma_t\wedge\partial_t\Gamma_t\in H^{n,\,0}_{\bar\partial_t}(X_t,\,L_t^2)\simeq H^0(X_t,\,K_{X_t}\otimes L_t^2)\simeq\C$ (as argued in $\S$\ref{section:introd}, just above (\ref{eqn:gamma-map-homogeneous})).

Thus, $F={\cal H}^{n,\,0}({\cal L}^2)\longrightarrow B$ is the holomorphic vector bundle considered in [Ber09] (denoted there by $E$) and induced by ${\cal L}^2\longrightarrow{\cal X}$ here. In our $p$-contact situation, $F$ is a holomorphic {\it line bundle} whose fibres are \begin{eqnarray*}F_t= H^{n,\,0}(X_t,\,L_t^2), \hspace{5ex} t\in B.\end{eqnarray*} Note that, unlike [Ber09], we do not need to assume the existence of a K\"ahler metric on ${\cal X}$ at this point in order to ensure the existence of the line bundle $F$. Our $p$-contact hypothesis does that job.

As in [Ber09], we have the following analogue of Definition \ref{Def:h_E}. However, this definition requires no Hermitian metric on ${\cal X}$ -- see Remark \ref{Rem:L2_inner-product_p=n}.

\begin{Def}\label{Def:h_F} Let $h$ be a $C^\infty$ Hermitian fibre metric on ${\cal L}\longrightarrow{\cal X}$. We associate with $h$ the $C^\infty$ Hermitian fibre metric $h_F$ on $F={\cal H}^{n,\,0}({\cal L}^2)\longrightarrow B$ defined by $B\ni t\longmapsto\langle\langle\,\cdot\,,\, \,\cdot\,\rangle\rangle_{F_t,\,h_t^2}$, where $\langle\langle\,\cdot\,,\, \,\cdot\,\rangle\rangle_{F_t,\,h_t^2}$ is the $L^2_{h_t^2}$-inner product on $C^\infty_{n,\,0}(X_t,\,L_t^2)$ defined by \begin{eqnarray}\label{eqn:L2_inner-product_F}\langle\langle\Gamma_t,\, \Omega_t\rangle\rangle_{F_t,\,h_t^2}:= \int\limits_{X_t} i^{n^2}\,\{\Gamma_t,\, \Omega_t\}_{h_t^2}, \hspace{5ex} t\in B,\end{eqnarray} for all $\Gamma_t, \Omega_t\in C^\infty_{n,\,0}(X_t,\,L_t^2)$.

\end{Def}

Note that $i^{n^2}\,\{\Gamma_t,\, \Omega_t\}_{h_t^2}$ is a $\C$-valued $(n,\,n)$-form on $X_t$, so it can be integrated to produce a complex number. It is independent of any Hermitian metric on $X_t$. Formula (\ref{eqn:coupling_inner-prod_X}) translates to: \begin{eqnarray}\label{eqn:L2_inner-product_expression_F}i^{n^2}\,\{\Gamma_t,\, \Omega_t\}_{h_t^2} = \langle\Gamma_t,\, \Omega_t\rangle_{\omega_t,\,h_t^2}\,dV_{\omega_t}\end{eqnarray} for every Hermitian metric $\omega_t$ on $X_t$. In particular, the right-hand side of (\ref{eqn:L2_inner-product_expression_F}) is independent of $\omega_t$.

\vspace{2ex}

Let $D_{\cal L} = D_{\cal L}' + D_{\cal L}''$ be the Chern connection of $({\cal L},\,h)$. On the other hand, we denote by $D_L = D_L' + D_L'' = (D_{L_t}')_{t\in B} + (D_{L_t}'')_{t\in B}$ the fibrewise Chern connection of $(L_t)_{t\in B}$. This means that, for each $t\in B$, $D_{L_t} = D_{L_t}' + D_{L_t}''$ is the Chern connection of $(L_t,\,h_t)$.

In particular, it makes sense to apply $D_L$ to any local $C^\infty$ section $u$ of $E$: if $u\in C^\infty(U,\,E)$, for some open subset $U\subset B$, then $u(t)\in E_t=H^{p,\,0}_{\bar\partial_t}(X_t,\,L_t)$, so $u(t)$ is a holomorphic $L_t$-valued $(p,\,0)$-form on $X_t$ and, therefore, \begin{eqnarray*}D_{L_t}u(t) = D_{L_t}'u(t) + D_{L_t}''u(t) = D_{L_t}'u(t)\in C^\infty_{p+1,\,0}(X_t,\,L_t), \hspace{5ex} t\in U.\end{eqnarray*}

Thus, we get: $D_Lu = D'_Lu = \bigg(D_{L_t}'u(t)\bigg)_{t\in U}$. Moreover, for any section $v\in C^\infty(U,\,E)$, we get: \begin{eqnarray*}\frac{1}{2}\,\bigg(u\wedge D'_Lv + D'_Lu\wedge v\bigg) = \frac{1}{2}\,\bigg(u(t)\wedge D_{L_t}'v(t) + D_{L_t}'u(t)\wedge v(t)\bigg)_{t\in U}\in C^\infty(U,\,F).\end{eqnarray*} Indeed, the right-hand-side term above is a {\it holomorphic} $L_t^2$-valued $(n,\,0)$-form on $X_t$ for every $t\in U$. To see the holomorphicity, we use the relations $D''_{L_t}u(t) = D''_{L_t}v(t) =0$ to compute: \begin{eqnarray*}D''_{L_t^2}\bigg(u(t)\wedge D_{L_t}'v(t) + D_{L_t}'u(t)\wedge v(t)\bigg) & = & -u(t)\wedge D_{L_t}''D_{L_t}'v(t) +  D_{L_t}''D_{L_t}'u(t)\wedge v(t) \\
  & = & -u(t)\wedge\Theta_{h_t}(L_t)\wedge v(t) + \Theta_{h_t}(L_t)\wedge u(t)\wedge v(t) \\
  & = & \Theta_{h_t}(L_t)\wedge\bigg(-u(t)\wedge v(t) + u(t)\wedge v(t)\bigg) = 0,\end{eqnarray*} where we have also used the oddness of $p$.

When $u=v\in C^\infty(U,\,E)$, the evenness of $p+1$ yields $u\wedge D'_Lu = D'_Lu\wedge u$. Thus, $u(t)\wedge D_{L_t}'u(t)$ is holomorphic (as an $L_t^2$-valued $(n,\,0)$-form on $X_t$), making $u\wedge D'_Lu$ a $C^\infty$ section of $F$.

\vspace{1ex}

These observations enable us to make the following

\begin{Def}\label{Def:bilinear-form} (i)\, We consider the following {\bf holomorphic quadratic} bundle map: \begin{eqnarray*}S:E\longrightarrow F, \hspace{5ex} S(u):=u\wedge D'_Lu.\end{eqnarray*}

\vspace{1ex}

  (ii)\, We consider the following {\bf holomorphic symmetric bilinear} bundle map: \begin{eqnarray*}A:E\times E\longrightarrow F, \hspace{5ex} A(u,\,v):=\frac{1}{2}\,\bigg(u\wedge D'_Lv + D'_Lu\wedge v\bigg).\end{eqnarray*}

\end{Def}

For all sections $u, v\in C^\infty(U,\,E)$, we have: \begin{eqnarray*}A(u,\,v) = \frac{1}{2}\,\bigg(S(u+v) - S(u) - S(v)\bigg), \hspace{3ex} \mbox{in particular}\hspace{3ex} A(u,\,u) = S(u),\end{eqnarray*} so $A$ is the {\it polarisation} of $S$. Note that if $\Gamma\in C^\infty(U,\,E)$ is a {\bf $p$-contact section} (in the sense that $\Gamma(t)\in H^{p,\,0}_{\bar\partial_t}(X_t,\,L_t)$ is $p$-contact for all $t\in U$), the section $A(\Gamma,\,\Gamma)\in C^\infty(U,\,F)$ is {\it non-vanishing}: \begin{eqnarray}\label{eqn:p-contact_section_A-pos}A(\Gamma,\,\Gamma)(t) = S(\Gamma)(t) = \Gamma(t)\wedge D'_{L_t}\Gamma(t)\neq 0,  \hspace{5ex} t\in U.\end{eqnarray}

\begin{Cor}\label{Cor:1-1_form_E-curvature} For any open subset $U\subset B$ and any {\bf $p$-contact section} $\Gamma\in C^\infty(U,\,E)$, \begin{eqnarray*}\frac{1}{A(\Gamma,\,\Gamma)}\,A\bigg(i\Theta_{h_E}(E)\,\Gamma,\,\Gamma\bigg)\end{eqnarray*} is a well-defined $\C$-valued $(1,\,1)$-form on $U$.

\end{Cor}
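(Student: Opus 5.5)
The plan is to unwind the two ingredients in the expression and check that each makes sense pointwise on $U$. Recall that $i\Theta_{h_E}(E)$ is a $C^\infty$ $(1,\,1)$-form on $B$ with values in $\mbox{End}(E)$. In a local holomorphic frame $\{\Gamma_1,\dots,\Gamma_r\}$ of $E$, for instance the $p$-contact frame of Corollary \ref{Cor:p-contact_frame}, we can write $i\Theta_{h_E}(E) = \sum_{j,k} i\,\Theta_{kj}\otimes\Gamma_j^\star\otimes\Gamma_k$, where the $\Theta_{kj}$ are scalar $(1,\,1)$-forms on $B$. Then $i\Theta_{h_E}(E)\,\Gamma = \sum_{k}\eta_k\otimes\Gamma_k$, where the $\eta_k$ are $\C$-valued $(1,\,1)$-forms on $U$ (their coefficients also involve the smooth coordinates of $\Gamma$). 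Thus $i\Theta_{h_E}(E)\,\Gamma$ is a $C^\infty$ section of $\Lambda^{1,\,1}T^\star U\otimes E$.

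The first step is to extend $A$ to form-valued arguments. Since $A$ is a $C^\infty$ (indeed holomorphic) $\C$-bilinear bundle map $E\times E\to F$, it extends uniquely to $(\Lambda^{\bullet}T^\star B\otimes E)\times E\to\Lambda^{\bullet}T^\star B\otimes F$ by setting $A(\eta\otimes u,\,v):=\eta\otimes A(u,\,v)$ for scalar forms $\eta$ on $B$. This is well defined because $A$ is $C^\infty(B)$-bilinear fibrewise: $A(u,\,v)(t)$ depends only on $u(t),\,v(t)$, since $D'_{L_t}$ acts only in the fibre variables of $X_t$ and not in $t$. With this extension, $A(i\Theta_{h_E}(E)\,\Gamma,\,\Gamma)=\sum_k\eta_k\otimes A(\Gamma_k,\,\Gamma)$ is a $C^\infty$ $(1,\,1)$-form on $U$ with values in the line bundle $F$. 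One should check that the result does not depend on the chosen frame. This follows from bilinearity, or more intrinsically by contracting the $\mbox{End}(E)$-valued form with $\Gamma$ and then applying $A(\cdot,\,\Gamma)$ as a bundle map $E\to F$ tensored with the identity on forms.

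The second step is the division. Because $F$ is a line bundle, as established via (\ref{eqn:F-fibres_dim_1}), and because $A(\Gamma,\,\Gamma)$ is a non-vanishing $C^\infty$ section of $F$ over $U$ by (\ref{eqn:p-contact_section_A-pos}), the quotient of an $F$-valued form by $A(\Gamma,\,\Gamma)$ is a well-defined scalar form. Concretely, for $t\in U$ and any $w\in F_t$ there is a unique $\lambda\in\C$ with $w=\lambda\,A(\Gamma,\,\Gamma)(t)$, and $\lambda$ depends smoothly on $t$ and $w$ by local triviality of $F$. Applying this coefficientwise to $\sum_k\eta_k\otimes A(\Gamma_k,\,\Gamma)$ gives the $\C$-valued $(1,\,1)$-form $\sum_k \frac{A(\Gamma_k,\,\Gamma)}{A(\Gamma,\,\Gamma)}\,\eta_k$, which is smooth on $U$.

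There is no real obstacle here. The only point requiring care is the precise justification that $A$ is tensorial, i.e.\ that it is a genuine bundle map over $B$ rather than a differential operator in $t$, so that it can absorb base-form coefficients. This holds because $D'_L$ is the fibrewise Chern connection $(D'_{L_t})_{t}$, which differentiates only along $X_t$. The fibres $F_t$ being one-dimensional, together with the non-vanishing of $A(\Gamma,\,\Gamma)$, then does the rest.
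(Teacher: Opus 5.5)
Your proposal is correct and follows essentially the same route as the paper: view $i\Theta_{h_E}(E)\,\Gamma$ as an $E$-valued $(1,\,1)$-form on $U$, apply $A$ fibrewise to obtain an $F$-valued $(1,\,1)$-form, and divide by the non-vanishing section $A(\Gamma,\,\Gamma)$ of the line bundle $F$. Two of your steps make explicit what the paper leaves implicit: you check that $A$ is tensorial over $B$ (because $D'_L$ acts only along the fibres $X_t$), and you define the quotient through the fibrewise identification $F_t\simeq\C$ given by $A(\Gamma,\,\Gamma)(t)$, which builds in the independence of the trivialisation of $F$ that the paper asserts.
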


A word of explanation is in order. We have seen that $A(\Gamma,\,\Gamma)$ is not a function on $U$, but a smooth section of $F$ on $U$. However, once a local holomorphic trivialisation (or frame) of $F$ has been fixed, $A(\Gamma,\,\Gamma)$ becomes a function with respect to it. This local function being non-vanishing (cf. (\ref{eqn:p-contact_section_A-pos})), we can divide by it. On the other hand, $i\Theta_{h_E}(E)$ is an $\operatorname{End} E$-valued $(1,\,1)$-form on $B$, so $i\Theta_{h_E}(E)\,\Gamma$ is an $E$-valued $(1,\,1)$-form on $U$. It follows that $A(i\Theta_{h_E}(E)\,\Gamma,\,\Gamma)$ is an $F$-valued $(1,\,1)$-form on $U$. In the chosen local holomorphic trivialisation of $F$, it identifies with a $\C$-valued $(1,\,1)$-form on $U$. The $\C$-valued $(1,\,1)$-form on $U$ considered in Corollary \ref{Cor:1-1_form_E-curvature} is independent of the chosen local holomorphic trivialisation of $F$. It is in this sense that it is {\it well defined}.

\begin{Def}\label{Def:A-twisted_Griffiths} The Hermitian holomorphic vector bundle $(E,\,h_E)\longrightarrow B$ is said to be {\bf A-twisted Griffiths (semi-)positive} if for every open subset $U\subset B$ and every $p$-contact section $\Gamma\in C^\infty(U,\,E)$, we have: \begin{eqnarray*}\frac{1}{A(\Gamma,\,\Gamma)}\,A\bigg(i\Theta_{h_E}(E)\,\Gamma,\,\Gamma\bigg) > 0 \hspace{3ex} (\mbox{respectively \hspace{1ex} $\geq 0$})\end{eqnarray*} as a $\C$-valued $(1,\,1)$-form on $U$. In this case, we write:

  \vspace{1ex}

\hspace{6ex} $(E,\,h_E)> 0$ (A-twisted Griffiths), respectively $(E,\,h_E)\geq 0$ (A-twisted Griffiths).

\end{Def}

The above definition can be formulated in a pointwise manner: for every $t\in B$, it requires {\it positive definiteness} (respectively {\it semi-positivity}) at $t$ of the $\C$-valued $(1,\,1)$-form of Corollary \ref{Cor:1-1_form_E-curvature} for every holomorphic $p$-contact form $\Gamma_t\in H^{p,\,0}(X_t,\,L_t)$.

\begin{The}\label{The:F-E_positivity} Let $\pi:{\cal X}\longrightarrow B$ be a holomorphic family of compact complex manifolds of dimension $n=2p+1=4l+3$ over an arbitrary complex manifold base $B$. Let ${\cal L}\longrightarrow{\cal X}$ be a holomorphic line bundle satisfying assumption $(H1)$ globally on $B$. Suppose there exists an $L_t$-valued holomorphic $p$-contact structure $\Gamma_t\in H^{p,\,0}_{\bar\partial_t}(X_t,\,L_t)$ on $X_t$ for every $t\in B$.

  Then, for any $C^\infty$ Hermitian fibre metric $h$ on ${\cal L}\longrightarrow{\cal X}$ and any Hermitian metric $\omega$ on ${\cal X}$, denoting by $h_E$ and $h_F$ the induced $C^\infty$ Hermitian fibre metrics on $E$, respectively $F$, the following implications hold: \begin{eqnarray*}i\Theta_{h_F}(F)\geq 0 & \implies & (E,\,h_E)\geq 0 \hspace{2ex} (\mbox{\bf A-twisted Griffiths})\\
 i\Theta_{h_F}(F)> 0 & \implies & (E,\,h_E)> 0 \hspace{2ex} (\mbox{\bf A-twisted Griffiths}).\end{eqnarray*}

\end{The}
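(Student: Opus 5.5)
We will reduce everything to a pointwise identity at a fixed $t_0\in B$ relating the curvature of $(F,\,h_F)$ to the $A$-transported curvature of $(E,\,h_E)$. Fix $t_0\in B$ and a $p$-contact form $\Gamma_{t_0}\in E_{t_0}$. By Theorem \ref{The:deformation-openness_H1} and Corollary \ref{Cor:p-contact-parallel_frame} (centred at $t_0$ instead of $0$), extend $\Gamma_{t_0}$ to a \emph{holomorphic} $p$-contact section $\Gamma$ of $E$ near $t_0$ with $(D_E'\Gamma)(t_0)=0$. Since the $(1,\,1)$-form of Corollary \ref{Cor:1-1_form_E-curvature} at $t_0$ depends only on $\Gamma(t_0)$ (curvature is tensorial and $A$ is bilinear), this choice of extension is harmless. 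Then $\sigma:=S(\Gamma)=A(\Gamma,\,\Gamma)$ is a holomorphic, non-vanishing section of the line bundle $F$ near $t_0$, because $S$ is a holomorphic quadratic bundle map. This gives a local holomorphic frame of $F$, and
\begin{eqnarray*}
i\Theta_{h_F}(F) = -i\partial\bar\partial\log|\sigma|^2_{h_F}.
\end{eqnarray*}

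The key step is to compare $i\partial\bar\partial\log|\sigma|^2_{h_F}$ with $A(i\Theta_{h_E}(E)\Gamma,\,\Gamma)/\sigma$ at $t_0$. The route goes through the Chern connection of $F$. Since $\sigma$ is holomorphic, $\bar\partial D_F'\sigma=\Theta_{h_F}(F)\sigma$, so
\begin{eqnarray*}
\Theta_{h_F}(F)=\bar\partial\big(D_F'\sigma/\sigma\big)
\end{eqnarray*}
in this frame. I would then try to establish a Leibniz-type rule
\begin{eqnarray*}
D_F'\,A(u,\,v) = A(D_E'u,\,v)+A(u,\,D_E'v) + R(u,\,v),
\end{eqnarray*}
where $R$ is an error term. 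Applying $\bar\partial$ and using the holomorphicity of $\Gamma$ would give
\begin{eqnarray*}
\Theta_{h_F}(F)\,\sigma = 2A(\Theta_{h_E}(E)\Gamma,\,\Gamma) + (\mbox{terms involving } D_E'\Gamma \mbox{ and } \bar\partial R).
\end{eqnarray*}
The terms in $D_E'\Gamma$ vanish at $t_0$ by the parallel normalisation. The two metrics should be related through fibre integrals: $h_E$ uses $\int i^{p^2}\{\cdot,\cdot\}_{h_t}\wedge(\omega_t)_{n-p}$, and $h_F$ uses $\int i^{n^2}\{\cdot,\cdot\}_{h_t^2}$. One would express $D'_E$ and $D'_F$ via the Kodaira--Spencer/Lie-derivative description: the $L^2$-orthogonal projection onto holomorphic forms of the derivative along a horizontal lift. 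One would then check that $\Gamma\mapsto\Gamma\wedge D'_L\Gamma$ intertwines them up to the error $R$.

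Granting the identity
\begin{eqnarray*}
i\Theta_{h_F}(F)=\frac{2}{A(\Gamma,\,\Gamma)}\,A\big(i\Theta_{h_E}(E)\Gamma,\,\Gamma\big)\quad\mbox{at } t_0,
\end{eqnarray*}
the implications follow at once. If $i\Theta_{h_F}(F)\geq 0$ (resp. $>0$) at $t_0$, then the $\C$-valued $(1,\,1)$-form of Corollary \ref{Cor:1-1_form_E-curvature} equals half of it, hence is $\geq0$ (resp. $>0$). Since $t_0$ and the $p$-contact form $\Gamma_{t_0}$ are arbitrary, this is the pointwise reformulation of Definition \ref{Def:A-twisted_Griffiths}. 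The normalisation $n=4l+3$ (so $p$ odd and $p^2$, $n^2$ of compatible parity) is what makes the sign factors $i^{p^2}$ and $i^{n^2}$ consistent.

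The main obstacle is the Leibniz-type identity. $D'_E$ and $D'_F$ involve $L^2$-projections onto the fibres $E_t$ and $F_t$, and $A$ involves the fibrewise operator $D'_L$, so a genuine commutation may fail by a second fundamental form term. I would first try to exploit that $F$ has rank one: projecting onto $F_t$ then just means pairing with $\sigma(t)$ in $h_F$, so $D_F'\sigma/\sigma=\partial\log|\sigma|^2_{h_F}$ can be computed directly. Its $\bar\partial$ at $t_0$ should then be matched, by differentiating the fibre integral under the integral sign, with $2A(\Theta_{h_E}\Gamma,\Gamma)/\sigma$. Any leftover terms should be shown to vanish at $t_0$ thanks to $(D'_E\Gamma)(t_0)=0$. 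If this fails, I would settle for an inequality in the right direction, which suffices for both implications.
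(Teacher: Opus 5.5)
You follow the same route as the paper: a Leibniz rule for $A$, then $\bar\partial$ applied to it for a holomorphic $p$-contact section, then reading off the sign from (\ref{eqn:curvatura_F_Gamma_A}). But you stop at the only non-formal step. The identity $i\Theta_{h_F}(F)=\frac{2}{A(\Gamma,\,\Gamma)}\,A(i\Theta_{h_E}(E)\Gamma,\,\Gamma)$ is simply granted, and the error term $R$ is never computed. The proposed repairs (the normalisation $(D'_E\Gamma)(t_0)=0$, differentiating fibre integrals, ``an inequality in the right direction'') are intentions rather than arguments. The paper closes this step by asserting $R=0$, i.e.\ $D'_FA(u,\,v)=A(D'_Eu,\,v)+A(u,\,D'_Ev)$, on the grounds that the horizontal operator $D'_E$ commutes with the vertical operator $D'_L$. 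As written, your proposal therefore has a genuine gap exactly where the content of the theorem lies.

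Your suspicion about that step is justified, and none of your repairs can work for an arbitrary Hermitian metric $\omega$ on ${\cal X}$. The metric $h_F$ does not depend on $\omega$, while $h_E$ does. Replace $\omega$ by $e^{g\circ\pi}\omega$ with $g\in C^\infty(B,\,\R)$. This replaces $\omega_t$ by $e^{g(t)}\omega_t$, hence $h_E$ by $e^{(n-p)g}h_E$, $D'_E$ by $D'_E+(n-p)\,\partial g$, and $i\Theta_{h_E}(E)$ by $i\Theta_{h_E}(E)-(n-p)\,i\partial\bar\partial g\,\Id_E$. Meanwhile $D'_F$ and $i\Theta_{h_F}(F)$ are unchanged. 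Since $A$ is $C^\infty(B)$-bilinear, the $(1,\,1)$-form of Corollary \ref{Cor:1-1_form_E-curvature} shifts by $-(n-p)\,i\partial\bar\partial g$, so the identity cannot hold for both metrics. The discrepancy is tensorial, so no choice of extension of $\Gamma_{t_0}$ removes it. It can also have arbitrary sign and size at a given point, so no one-sided inequality survives either.

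Concretely, take ${\cal X}=B\times X$, ${\cal L}=\pr_2^\star L$, $h=\pr_2^\star h_0$ and $\omega=\pr_1^\star\omega_B+\pr_2^\star\omega_X$, where $X$ is any compact manifold with an $L$-valued holomorphic $p$-contact structure (e.g.\ the Iwasawa manifold, $L$ trivial). Then $E$ and $F$ are trivial with constant metrics, so $i\Theta_{h_F}(F)=0$. Yet for $e^{g\circ\pi}\omega$ with $g$ strictly plurisubharmonic near $t_0$, the $A$-twisted form equals $-(n-p)\,i\partial\bar\partial g<0$ at $t_0$. Twisting ${\cal L}$ by the pull-back of a positively curved line bundle on $B$ (e.g.\ the trivial bundle with weight $e^{-|t|^2}$ on a ball) defeats the strict implication in the same way.

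So the missing step is not a lemma you failed to find. The Leibniz rule with $R=0$ fails in general, and the statement with $\omega$ arbitrary needs an extra hypothesis tying $h_E$ to $h_F$ before any proof along these lines can go through.
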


\noindent {\it Proof.} For any open subset $U\subset B$ and any sections $u,v\in C^\infty(U,\,E)$, we get: \begin{eqnarray*}D'_F A(u,\,v) & = & \frac{1}{2}\,\bigg(D'_E u\wedge D'_Lv + u\wedge D'_E(D'_Lv) + D'_E(D'_Lu)\wedge v + D'_Lu\wedge D'_Ev\bigg) \\
  & = & \frac{1}{2}\,\bigg(D'_E u\wedge D'_Lv + u\wedge D'_L(D'_Ev) + D'_L(D'_E u)\wedge v + D'_Lu\wedge D'_E v\bigg) \\
  & = & A(D'_E u,\,v) + A(u,\,D'_Ev).\end{eqnarray*} We have used the commutation of $D'_E$ (which acts {\it horizontally}, i.e. in the $B$-directions, in the $t$-variables) with $D'_L$ (which acts {\it vertically}, i.e. in the directions of the fibres $X_t$). The analogous identity for $D''_F=\bar\partial$ in place of $D'_F$ can be proved in the same way.

Now, suppose the sections $u$ and $v$ are {\it holomorphic}. Thus, $D''_E u = D''_E v = 0$ on $U$. Applying $D''_F$ to the above identity, we get the second equality below: \begin{eqnarray}\label{eqn:curvatura_F_u-v_A}\Theta_{h_F}(F)\bigg(A(u,\,v)\bigg) = D''_FD'_F A(u,\,v) = A\bigg(\Theta_{h_E}(E)\, u,\,v\bigg) + A\bigg(u,\,\Theta_{h_E}(E)\,v\bigg).\end{eqnarray}

If, moreover, the holomorphic sections $u$ and $v$ are a same {\bf $p$-contact section} $\Gamma\in H^0(U,\,E)$, then $A(\Gamma,\,\Gamma)\in H^0(U,\,F)$ is a {\it non-vanishing} holomorphic section of $F$ (see (\ref{eqn:p-contact_section_A-pos})) and we get: \begin{eqnarray}\label{eqn:curvatura_F_Gamma_A}i\Theta_{h_F}(F) = \frac{2}{A(\Gamma,\,\Gamma)}\,A\bigg(i\Theta_{h_E}(E)\,\Gamma,\,\Gamma\bigg)\end{eqnarray} after choosing $A(\Gamma,\,\Gamma)$ as a local holomorphic frame of $F$. Note that the right-hand-side term of (\ref{eqn:curvatura_F_Gamma_A}) is twice the object of Corollary \ref{Cor:1-1_form_E-curvature}.

The contention follows from (\ref{eqn:curvatura_F_Gamma_A}) and from Definition \ref{Def:A-twisted_Griffiths}. The fact that $\Gamma$ was assumed {\it holomorphic} in the proof of (\ref{eqn:curvatura_F_Gamma_A}) and only $C^\infty$ in Definition \ref{Def:A-twisted_Griffiths} is not problematic since (\ref{eqn:curvatura_F_Gamma_A}) and the notion of {\it A-twisted Griffiths (semi-)positivity} at a point $t\in B$ depend only on the value $\Gamma(t)$.  \hfill $\Box$

\vspace{2ex}

Theorem \ref{The:F-E_positivity} can be combined with Theorem 1.2 of [Ber09]. We now state the latter in the case of the holomorphic line bundle $F\longrightarrow B$ (whose rank one is due to our {\it holomorphic $p$-contact} hypothesis), but Berndtsson's theorem holds for vector bundles of arbitrary rank that are proved to be {\bf Nakano (semi-)positive} under the (semi-)positivity assumption on ${\cal L}$.

\begin{The}([Ber09, Theorem 1.2])\label{The:Berndtsson} If the complex manifold ${\cal X}$ is {\bf K\"ahler} and ${\cal L}\rightarrow{\cal X}$ has a $C^\infty$ Hermitian fibre metric $h$ whose Chern curvature form $i\Theta_h({\cal L})$ is {\bf (semi-)positive} everywhere on ${\cal X}$, then the Hermitian holomorphic line bundle $(F,\, h_F)$ is {\bf (semi-)positive} everywhere on $B$.

\end{The}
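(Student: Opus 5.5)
Theorem \ref{The:Berndtsson} is Berndtsson's result, quoted from [Ber09], and is not proved in the paper. What I would do is check that our setting meets its hypotheses and then reduce to the general rank statement of [Ber09, Theorem 1.2].

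The starting point is to rewrite $F$ in Berndtsson's language. Put ${\cal L}':={\cal L}^2$, equipped with the fibre metric $h^2$. Its curvature is
\begin{eqnarray*}i\Theta_{h^2}({\cal L}^2)=2\,i\Theta_h({\cal L}),\end{eqnarray*}
so it is (semi-)positive on ${\cal X}$ exactly when $i\Theta_h({\cal L})$ is. The sheaf $F=\pi_\star(\Omega^n_{{\cal X}/B}\otimes{\cal L}^2)=\pi_\star(K_{{\cal X}/B}\otimes{\cal L}')$ is then precisely Berndtsson's direct image bundle attached to ${\cal L}'$. Likewise $h_F$ from Definition \ref{Def:h_F} is exactly his $L^2$ metric $\int_{X_t}i^{n^2}u\wedge\bar u\,e^{-\varphi'}$, with $\varphi'=2\varphi$ the local weight of $h^2$. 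As Remark \ref{Rem:L2_inner-product_p=n} notes, this expression does not depend on any metric on the fibres.

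Next I would check Berndtsson's standing assumptions. He needs ${\cal X}$ Kähler, which is assumed, and $\pi$ a proper holomorphic submersion. He also needs $F$ to be a genuine holomorphic vector bundle with the expected fibres $H^{n,0}(X_t,L_t^2)$. In his paper this comes from Ohsawa–Takegoshi extension together with semi-continuity. Here it comes instead from (\ref{eqn:F-fibres_dim_1}): the dimension is constantly $1$, so Grauert's theorem gives local freeness and base change, as in $\S$\ref{subsection:Hp0_line-bundle}. The base $B$ may be replaced by small coordinate balls, since curvature is local.

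The core is Berndtsson's computation. Near $t_0$, take a holomorphic section $u$ of $F$, here simply a nowhere-vanishing local frame. Represent it by a smooth $(n,0)$-form $\tilde u$ on $\pi^{-1}(U)$ with $\bar\partial\tilde u\wedge dt=0$, chosen so that the horizontal correction $\eta_j$, defined by $D'\tilde u=dt_j\wedge\eta_j$, is fibrewise primitive. Then $\pi_\star$ of $c_n\,\tilde u\wedge\bar{\tilde u}\,e^{-\varphi'}$ gives
\begin{eqnarray*}-i\partial\bar\partial\,\|u\|^2_{h_F}=\pi_\star\bigg(c_n\,i\Theta_{h^2}({\cal L}^2)\wedge\tilde u\wedge\bar{\tilde u}\,e^{-\varphi'}\bigg)+\big(\|\bar\partial\eta\|^2-\|\eta\|^2\text{-type terms}\big).\end{eqnarray*}
Using a Kähler form on ${\cal X}$, the Hörmander $L^2$ estimate on the fibres for $(n,1)$-forms bounds the negative term $\|\eta\|^2$ by the positive curvature term plus $\|\bar\partial\eta\|^2$, with $\eta$ chosen as the $L^2$-minimal solution. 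Since $F$ has rank one, $\log\|u\|^2_{h_F}$ gives $i\Theta_{h_F}(F)=-i\partial\bar\partial\log\|u\|^2$. The inequality then yields $i\Theta_{h_F}(F)\geq0$, and strict positivity follows when $i\Theta_h({\cal L})>0$, because the curvature term is strictly positive for $u\neq0$.

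The main obstacle is this $L^2$ step, which is the heart of [Ber09]. My approach is to quote it rather than reprove it, after confirming that Berndtsson's proof uses local freeness only to define $F$, which our $p$-contact hypothesis already supplies.
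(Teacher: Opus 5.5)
Your proposal is correct and follows the paper, which also does not reprove [Ber09, Theorem 1.2]. It simply applies it to ${\cal L}^2$ with the metric $h^2$, using $i\Theta_{h^2}({\cal L}^2)=2\,i\Theta_h({\cal L})$ and the fact that for the line bundle $F$ Nakano (semi-)positivity is just (semi-)positivity of $i\Theta_{h_F}(F)$.

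Your sketch of Berndtsson's computation is not needed and is slightly off. Every $(n,0)$-form on an $n$-dimensional fibre is automatically primitive, so the condition you impose on $\eta_j$ is vacuous; in [Ber09] the K\"ahler form is used to make the $(n-1,1)$-forms $\nu_j$ in $\bar\partial\tilde u=\sum_j dt_j\wedge\nu_j$ fibrewise primitive. Since you quote the theorem rather than reprove it, this does not affect your argument.
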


For line bundles (such as $F$), both Nakano and Griffiths (semi-)positivity properties coincide with the standard (semi-)positivity property of the Chern curvature form of the line bundle. On the other hand, $F$ is defined here using ${\cal L}^2$ rather than ${\cal L}$, but $i\Theta_h({\cal L}^2) = 2\,i\Theta_h({\cal L})$, so the positive factor is harmless for positivity.

Putting together Theorems \ref{The:F-E_positivity} and \ref{The:Berndtsson}, we get Theorem \ref{The:introd_F_positivity_Berndtsson-application} stated in the introduction.

\section{Moduli space of relative holomorphic $p$-contact structures}\label{section:moduli}

Let $\pi:{\cal X}\longrightarrow B$ be a {\it proper holomorphic submersion}, where ${\cal X}$ and $B$ are complex manifolds. Suppose the complex dimension of the fibres $X_t:=\pi^{-1}(t)$, with $t\in B$, is $n=2p+1$ for some {\it odd} positive integer $p$. Let ${\cal L}\longrightarrow{\cal X}$ be a $C^\infty$ complex line bundle and let $L_t:= {\cal L}_{|X_t}$ be its restriction to the fibre over $t$.

We denote by ${\cal H}ol({\cal L})$ the topological space consisting of the set of holomorphic structures $D''$ on ${\cal L}$ endowed with the $C^\infty$ topology. For any $D''\in{\cal H}ol({\cal L})$, let ${\cal L}_{D''}:=({\cal L},\,D'')$ be the resulting holomorphic line bundle over ${\cal X}$ and let $L_{D'',\,t}:={\cal L}_{D''|X_t}\longrightarrow X_t$ be the restriction of ${\cal L}_{D''}$ to the fibre over $t$. We denote by $D''_t:=(D'')_{|X_t}$ the holomorphic structure of $L_{D'',\,t}$ (induced by $D''$). Thus, $L_{D'',\,t} = (L_t,\,D''_t)$.

If $D''$ satisfies hypothesis $(H1)$ globally on $B$, namely \begin{eqnarray*} (H1)\hspace{3ex} \mbox{dim}_\C H^{p,\,0}_{D''_t}(X_t,\,L_t) \hspace{3ex}\mbox{is independent of}\hspace{1ex} t\in B,\end{eqnarray*} let $E_{D''}:={\cal H}^{p,\,0}_{D''}({\cal L})$ be the holomorphic vector bundle over $B$ associated with the locally free sheaf \begin{eqnarray*}\Omega^p_{{\cal X}/B}\otimes{\cal L}_{D''}.\end{eqnarray*} As we saw in $\S$\ref{subsection:Hp0_line-bundle}, its fibres are \begin{eqnarray*}(E_{D''})_t = {\cal H}^{p,\,0}_{D''}({\cal L})_t = H^{p,\,0}_{D''_t}(X_t,\,L_t),  \hspace{5ex} t\in B.\end{eqnarray*}

A continuous section $\Gamma\in C^0(U,\,{\cal H}^{p,\,0}_{D''}({\cal L})_{|U})$ on some open subset $U\subset B$ is said to be a {\bf continuous $p$-contact section} if, for every $t\in U$, the $L_{D'',\, t}$-valued holomorphic $(p,\,0)$-form $\Gamma_t:=\Gamma(t)$ on $X_t$ satisfies the condition: \begin{eqnarray*}\Gamma_t\wedge\partial_t\Gamma_t\neq 0 \hspace{5ex} \mbox{at every point of $X_t$}.\end{eqnarray*}

For every fixed $r\in\N$, we will say that a holomorphic structure $D''\in{\cal H}ol({\cal L})$ satisfies {\bf hypothesis $(H_1^{(r)})$} globally on $B$ if \begin{eqnarray*} (H_1^{(r)})\hspace{5ex} \mbox{dim}_\C H^{p,\,0}_{D''_t}(X_t,\,L_t)=r \hspace{3ex}\mbox{for all}\hspace{1ex} t\in B.\end{eqnarray*} 

\begin{Def}\label{Def:C_p_set} For each $r\in\N$, we consider the topological space consisting of the set \begin{eqnarray*}{\cal C}^{(r)}_p({\cal X},\,B,\,{\cal L}):=\bigg\{D''\in{\cal H}ol({\cal L}) &\mid & D''\hspace{1ex}\mbox{satisfies}\hspace{1ex} (H_1^{(r)}) \hspace{1ex} \mbox{globally on $B$ and, for every $t_0\in B$, there} \\
 & & \mbox{exists an open neighbourhood $U$ of $t_0$, with compact closure in $B$,} \\
  & & \mbox{and a continuous $p$-contact section}\hspace{1ex} \Gamma\in C^0\bigg(U,\,{\cal H}^{p,\,0}_{D''}({\cal L})_{|U}\bigg)\bigg\}\end{eqnarray*} equipped with the $C^\infty$ topology.

\end{Def}

For any $C^\infty$ Hermitian fibre metric $h$ on ${\cal L}$, we denote by $U_h({\cal L})$ the group of $h$-unitary $C^\infty$ automorphisms of ${\cal L}$. In particular, each $T\in U_h({\cal L})$ defines an isometry $T_{(t,\,z)}$ of each fibre ${\cal L}_{(t,\,z)}$ for the pointwise inner product $\langle\,\cdot\,,\,\cdot\,\rangle_{h(t,\,z)}$. For each $t\in B$, each $T\in U_h({\cal L})$ induces an $h_t$-unitary $C^\infty$ automorphism $T_t\in U_{h_t}(L_t)$ of $L_t={\cal L}_{|X_t}$, where $h_t:=h_{|L_t}$.

\begin{Lem}\label{Lem:right-action_C_p} For each $r\in\N$ and any $C^\infty$ Hermitian fibre metric $h$ on ${\cal L}$, the right action: \begin{eqnarray*}{\cal C}^{(r)}_p({\cal X},\,B,\,{\cal L})\times U_h({\cal L})\longrightarrow {\cal C}^{(r)}_p({\cal X},\,B,\,{\cal L}), \hspace{5ex} (D'',\,T)\longmapsto D''_T:=T^{-1}\circ D''\circ T,\end{eqnarray*} is {\bf well defined} in the following sense:

\vspace{1ex}

(a)\, if $D''$ satisfies the non-jumping property $(H1)$ globally on $B$, then so does $D''_T$;

\vspace{1ex}

(b)\, for any open subset $U\subset B$, if $\Gamma\in C^0\bigg(U,\,{\cal H}^{p,\,0}_{D''}({\cal L})\bigg)$ is a continuous $p$-contact section, then $T^{-1}\Gamma\in C^0\bigg(U,\,{\cal H}^{p,\,0}_{D''_T}({\cal L})\bigg)$ is a continuous $p$-contact section for every $T\in U_h({\cal L})$.

\end{Lem}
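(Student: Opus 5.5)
The whole argument rests on one observation: $T$ is a $C^\infty$ isomorphism of holomorphic line bundles
\begin{eqnarray*}T:\,({\cal L},\,D''_T)\longrightarrow({\cal L},\,D''),\end{eqnarray*}
because $D''\circ T = T\circ D''_T$ by the very definition of $D''_T$. So $T^{-1}$ sends $D''$-holomorphic objects to $D''_T$-holomorphic ones. I would first check that $D''_T$ is again a holomorphic structure: it is a $\bar\partial$-type operator, since $T$ is a $C^\infty$ bundle automorphism and therefore $T^{-1}\circ(D''\circ T)$ satisfies the Leibniz rule. It is also integrable, since $(D''_T)^2 = T^{-1}(D'')^2T = 0$. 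Everything is compatible with restriction to the fibres, so for every $t\in B$ we have $(D''_T)_t = T_t^{-1}\circ D''_t\circ T_t$.

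For (a), I extend $T_t$ to $L_t$-valued forms by acting on the bundle factor only. Then $T_t$ commutes with wedging by scalar forms and intertwines $D''_t$ and $(D''_T)_t$. Hence $u\mapsto T_t^{-1}u$ is a $\C$-linear bijection from $\ker\big(D''_t\colon C^\infty_{p,\,0}\to C^\infty_{p,\,1}\big)$ onto $\ker\big((D''_T)_t\bigr)$ in the same bidegree. It follows that $H^{p,\,0}_{(D''_T)_t}(X_t,\,L_t)$ and $H^{p,\,0}_{D''_t}(X_t,\,L_t)$ have the same dimension for every $t$, so both $(H1)$ and $(H_1^{(r)})$ are preserved. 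The same map gives a bundle isomorphism $E_{D''}\simeq E_{D''_T}$, $u\mapsto T^{-1}u$, covering the identity of $B$. It is continuous, indeed $C^\infty$, in $t$ because $T$ is smooth on ${\cal X}$. It is even holomorphic, since it is induced by the isomorphism of holomorphic line bundles above, which identifies the two direct-image sheaves. Consequently, if $\Gamma$ is a continuous section of $E_{D''}$ over $U$, then $T^{-1}\Gamma$ is a continuous section of $E_{D''_T}$ over $U$.

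For (b), it remains to check the non-degeneracy condition, and this is the main point. The form $\Gamma_t\wedge\partial_t\Gamma_t$ is defined as in [BP25, Observation 2.1] as $\Gamma_t\wedge D'_{h_t}\Gamma_t$, where $D'_{h_t}$ is the $(1,\,0)$-part of the Chern connection of $(L_t,\,h_t,\,D''_t)$. The key fact is that $T$ is $h$-unitary. The Chern connection of $({\cal L},\,h,\,D''_T)$ is then $T^{-1}\circ D_h\circ T$, where $D_h$ is the Chern connection of $({\cal L},\,h,\,D'')$. Indeed, $T^{-1}D_hT$ is $h$-compatible because $T$ is an isometry, and its $(0,\,1)$-part is $D''_T$; uniqueness of the Chern connection then gives the claim. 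On the fibres this reads $D'_{h_t,T} = T_t^{-1}D'_{h_t}T_t$. So
\begin{eqnarray*}(T_t^{-1}\Gamma_t)\wedge D'_{h_t,T}(T_t^{-1}\Gamma_t) = (T_t^{-1}\Gamma_t)\wedge T_t^{-1}D'_{h_t}\Gamma_t = (T_t^{-1})^{\otimes 2}\big(\Gamma_t\wedge D'_{h_t}\Gamma_t\big),\end{eqnarray*}
where $(T_t^{-1})^{\otimes 2}$ acts on $L_t^2$. Since $T_t$ is pointwise invertible, the right-hand side is non-zero exactly where $\Gamma_t\wedge\partial_t\Gamma_t$ is non-zero, that is, everywhere on $X_t$.

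Alternatively, one could bypass the metric by using the independence of $\Gamma\wedge\partial\Gamma$ from the choice of $h$ and computing in local frames. The unitary route above is cleaner, so I would present that one. Combining (a) and (b), any $D''\in{\cal C}^{(r)}_p({\cal X},\,B,\,{\cal L})$ has $D''_T\in{\cal C}^{(r)}_p({\cal X},\,B,\,{\cal L})$, with the same neighbourhoods $U$. Finally, $(D''_T)_S = D''_{TS}$, so this is indeed a right action.
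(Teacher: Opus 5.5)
Your proposal is correct. Part (a) is essentially the paper's argument, but for the non-degeneracy in (b) you take a different route.

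The paper works in local holomorphic coordinates on $X_t$ and a local $D''_t$-holomorphic frame $e_t$. Then $T_t^{-1}e_t$ is a $D''_{T,\,t}$-holomorphic frame, and in it $T_t^{-1}\Gamma_t$ has the same holomorphic coefficients $\Gamma_I^{(t)}$. The local formula for $\Gamma\wedge\partial\Gamma$ therefore gives $(T_t^{-1}\Gamma_t)\wedge\partial_t(T_t^{-1}\Gamma_t)=T_t^{-2}(\Gamma_t\wedge\partial_t\Gamma_t)$ directly. You instead use the global description $\Gamma_t\wedge D'_{h_t}\Gamma_t$, together with the fact that conjugating the Chern connection by an $h$-unitary gauge transformation gives the Chern connection of $(h,\,D''_T)$.

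What each buys:
\begin{itemize}
\item The paper's computation never uses unitarity, so it shows that (b) holds for an arbitrary $C^\infty$ automorphism of ${\cal L}$.
\item Your argument is coordinate-free and does not rely on the local-frame expression of $\Gamma\wedge\partial\Gamma$, but it does use that $T$ is unitary. As you remark, invoking the $h$-independence of $\Gamma\wedge D'_h\Gamma$ from [BP25] removes this restriction, since $T^{-1}D_hT$ is in any case the Chern connection of $D''_T$ with respect to the metric $h(T\,\cdot\,,\,T\,\cdot\,)$.
\end{itemize}

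You also check several points the paper leaves implicit:
\begin{itemize}
\item $D''_T$ is integrable.
\item $u\mapsto T^{-1}u$ is a holomorphic, hence continuous, bundle isomorphism $E_{D''}\to E_{D''_T}$, so $T^{-1}\Gamma$ really is a continuous section.
\item $(D''_T)_S=D''_{TS}$, so the action is a right action.
\end{itemize}
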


\noindent {\it Proof.} (a)\, For each $t\in B$, we have $D''_{T,\,t} = T_t^{-1}\circ D''_t\circ T_t$. This readily implies the following identities: \begin{eqnarray*}T_t^{-1}(\ker D''_t) = \ker D''_{T,\,t} \hspace{5ex}\mbox{and}\hspace{5ex} T_t^{-1}(\mbox{Im}\, D''_t) = \mbox{Im}\, D''_{T,\,t}.\end{eqnarray*} We infer that \begin{eqnarray*}T_t^{-1}:H^{p,\,0}_{D''_t}(X_t,\,L_t)\longrightarrow H^{p,\,0}_{D''_{T,\,t}}(X_t,\,L_t)\end{eqnarray*} is a $\C$-vector space isomorphism for every $t\in B$. Thus, these vector spaces have the same dimension.

\vspace{1ex}

(b)\, In local holomorphic coordinates $z_1^{(t)},\dots , z_n^{(t)}$ on $X_t$ and a local holomorphic frame $\{e_t\}$ of $L_{D'',\,t} = (L_t,\,D''_t)$, we have: \begin{eqnarray*}\Gamma_t = \sum\limits_{|I|=p}\Gamma_I^{(t)}\,dz_I^{(t)}\otimes e_t, \hspace{3ex}\mbox{hence}\hspace{3ex} T_t^{-1}\Gamma_t = \sum\limits_{|I|=p}\Gamma_I^{(t)}\,dz_I^{(t)}\otimes (T_t^{-1}e_t),\end{eqnarray*} where the $\Gamma_I^{(t)}$ are locally defined holomorphic functions on $X_t$. Consequently, we get the local expressions: \begin{eqnarray*}\Gamma_t\wedge\partial_t\Gamma_t & = & \bigg(\sum\limits_{|I|=|J|=p}\Gamma_I^{(t)}\,dz_I^{(t)}\wedge\partial_t\Gamma_J^{(t)}\wedge dz_J^{(t)} \bigg)\otimes e_t^2, \\
  \bigg(T_t^{-1}\Gamma_t\bigg)\wedge\partial_t\bigg(T_t^{-1}\Gamma_t\bigg) & = & T_t^{-2}\bigg(\Gamma_t\wedge\partial_t\Gamma_t\bigg),\end{eqnarray*} for every $t\in B$. This proves (b).  \hfill $\Box$

\vspace{2ex}

Lemma \ref{Lem:right-action_C_p} enables us to make the following

\begin{Def}\label{Def:moduli-space}  For each $r\in\N$, the quotient set ${\cal M}^{(r)}_p({\cal X},\,B,\,{\cal L}) :={\cal C}^{(r)}_p({\cal X},\,B,\,{\cal L})/U_h({\cal L})$, equipped with the quotient topology, is called the {\bf r-th moduli space of relative holomorphic p-contact structures} on $({\cal X},\,B,\,{\cal L})$.

\end{Def}

Two of its properties are given in

\begin{The}\label{The:moduli-space_prop} Fix an arbitrary $r\in\N$.

\vspace{1ex}  

  $(1)$ The moduli space ${\cal M}^{(r)}_p({\cal X},\,B,\,{\cal L})$ is {\bf Hausdorff}.

\vspace{1ex}

$(2)$\, If $B$ is {\bf compact}, the moduli space ${\cal M}^{(r)}_p({\cal X},\,B,\,{\cal L})$ is {\bf open} in the quotient topological space ${\cal P}^{(r)}_p({\cal X},\,B,\,{\cal L})/U_h({\cal L})$, where ${\cal P}^{(r)}_p({\cal X},\,B,\,{\cal L})$ is the topological space consisting of the set \begin{eqnarray*}{\cal P}^{(r)}_p({\cal X},\,B,\,{\cal L}):= \bigg\{D''\in{\cal H}ol({\cal L}) &\mid & D''\hspace{1ex}\mbox{satisfies}\hspace{1ex} (H_1^{(r)})\hspace{1ex} \mbox{globally on $B$}\bigg\}\end{eqnarray*} equipped with the $C^\infty$ topology.

\end{The}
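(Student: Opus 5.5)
For $(1)$, I would reduce Hausdorffness of the quotient to the closedness of the graph of the action, using the compactness of the structure group. The relation to establish is
\begin{eqnarray*}R:=\bigg\{(D''_1,\,D''_2)\in{\cal C}^{(r)}_p\times{\cal C}^{(r)}_p\,\mid\,D''_2 = (D''_1)_T \hspace{1ex}\mbox{for some}\hspace{1ex} T\in U_h({\cal L})\bigg\},\end{eqnarray*}
and I would show it is closed, where ${\cal C}^{(r)}_p$ abbreviates ${\cal C}^{(r)}_p({\cal X},\,B,\,{\cal L})$. Since the quotient map by a group action is open, closedness of $R$ gives Hausdorffness of ${\cal M}^{(r)}_p({\cal X},\,B,\,{\cal L})$.

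To prove $R$ closed, take $D''_{1,k}\to D''_1$ and $(D''_{1,k})_{T_k}=D''_{2,k}\to D''_2$ in the $C^\infty$ topology. Since ${\cal L}$ is a line bundle, $T_k$ is a smooth map ${\cal X}\to S^1$, and the action becomes
\begin{eqnarray*}D''_T = D'' + T^{-1}\bar\partial T.\end{eqnarray*}
Hence $T_k^{-1}\bar\partial T_k = D''_{2,k}-D''_{1,k}$ converges in $C^\infty$. Because $|T_k|=1$, the real and imaginary parts of $T_k^{-1}dT_k$ are determined by $T_k^{-1}\bar\partial T_k$, since $T^{-1}\partial T = -\overline{T^{-1}\bar\partial T}$. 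So $dT_k$ is bounded in every $C^j$ on compact sets, and the $T_k$ themselves are bounded. By Arzelà–Ascoli and a diagonal argument over an exhaustion of ${\cal X}$, a subsequence converges in $C^\infty_{loc}$ to some $T\in U_h({\cal L})$ with $D''_2=(D''_1)_T$.

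The main obstacle is the topology when ${\cal X}$ is non-compact: one must take the $C^\infty$ topology to mean $C^\infty_{loc}$ convergence, and check that local uniform convergence is what the quotient topology sees. I would work throughout with sequences, treating the spaces as metrizable Fréchet-type spaces. I would also note that the limit lies in ${\cal C}^{(r)}_p$ because it is the image of $D''_1\in{\cal C}^{(r)}_p$ under $T$, by Lemma \ref{Lem:right-action_C_p}.

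For $(2)$, since the quotient map ${\cal P}^{(r)}_p\to{\cal P}^{(r)}_p/U_h({\cal L})$ is open and ${\cal C}^{(r)}_p$ is $U_h({\cal L})$-invariant by Lemma \ref{Lem:right-action_C_p}, it suffices to show that ${\cal C}^{(r)}_p$ is open in ${\cal P}^{(r)}_p$. Fix $D''_0\in{\cal C}^{(r)}_p$. By compactness of $B$, finitely many open sets $U_1,\dots,U_m$ with continuous $p$-contact sections $\Gamma^{(i)}$ of $E_{D''_0}$ cover $B$. Shrinking slightly, I would choose $V_i\Subset U_i$ that still cover $B$.

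For $D''$ near $D''_0$ in ${\cal P}^{(r)}_p$, the constancy of $r$ lets me use Kodaira–Spencer theory [KS60] for the joint family parametrised by $(t,\,D'')$. The kernels of the Laplacians $\Delta''_{D''_t}$ in bidegree $(p,\,0)$ have constant dimension $r$, so they form a continuous vector bundle over $\overline{V_i}\times W$, where $W$ is a neighbourhood of $D''_0$. The $L^2_{h_t}$-orthogonal projection of $\Gamma^{(i)}(t)$ onto $\ker\Delta''_{D''_t}$, after identifying harmonic with holomorphic forms in bidegree $(p,\,0)$, gives continuous sections $\Gamma^{(i)}_{D''}$ depending continuously on $(t,\,D'')$ in $C^\infty$ norms on the fibres.

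By the compactness of $\overline{V_i}\times X$, used as in the proof of Theorem \ref{The:deformation-openness_H1}, the non-vanishing of $\Gamma\wedge\partial\Gamma$ persists for $D''$ close enough to $D''_0$. Hence $D''\in{\cal C}^{(r)}_p$. The delicate point is the joint continuity in $D''$ of the Kodaira–Spencer construction, which requires constant dimension; this is exactly why ${\cal P}^{(r)}_p$, rather than all of ${\cal H}ol({\cal L})$, is the ambient space.
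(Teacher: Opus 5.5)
Your proof is correct. For part $(1)$ you take a genuinely different route from the paper; for part $(2)$ you follow essentially the paper's route.

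\textbf{Part (1).} The paper does not argue directly. It identifies ${\cal H}ol({\cal L})$ with the space of $h$-compatible Chern connections and imports the Hausdorffness of ${\cal H}ol_h({\cal L})/U_h({\cal L})$ from [Kob87, Corollary 7.1.15]. It then passes to the invariant subspace ${\cal C}^{(r)}_p$.

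You instead prove directly that the orbit relation is closed. You use the fact that, for a line bundle, $U_h({\cal L})=C^\infty({\cal X},\,S^1)$ acts by the translations $D''\mapsto D''+T^{-1}\bar\partial T$. The constraint $|T|=1$ gives $T^{-1}\partial T=-\overline{T^{-1}\bar\partial T}$, so convergence of $T_k^{-1}\bar\partial T_k$ bounds all derivatives of $T_k$ on compact sets, and Arzel\`a--Ascoli applies.

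The paper's citation is shorter and sits in a framework valid for bundles of any rank. Your argument is elementary and self-contained. It also works directly on the non-compact total space ${\cal X}$, which matters because part $(1)$ does not assume $B$ compact. Finally, it makes explicit conventions the paper leaves implicit: the Fr\'echet $C^\infty_{loc}$ topology, metrisability so that sequences suffice, and openness of the quotient map.

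\textbf{Part (2).} Your argument matches the paper's: cover the compact base by finitely many sets carrying continuous $p$-contact sections, extend these sections to nearby $D''$ using the constancy of the rank $r$, and propagate the non-vanishing of $\Gamma\wedge\partial\Gamma$ by compactness. You are somewhat more careful than the paper on three points:
\begin{enumerate}
\item You state why openness of ${\cal C}^{(r)}_p$ in ${\cal P}^{(r)}_p$ suffices: the quotient map is open and ${\cal C}^{(r)}_p$ is $U_h({\cal L})$-invariant by Lemma \ref{Lem:right-action_C_p}.
\item You build the extended sections concretely by $L^2$-projection onto the kernels of the Laplacians.
\item You shrink to $V_i\Subset U_i$, so the lower bound on $|\Gamma\wedge\partial\Gamma|$ is taken over a genuinely compact set. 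The paper's section is defined only on $U$, yet the paper invokes the compactness of $\overline{U}$.
\end{enumerate}
In both versions, the continuity of the Kodaira--Spencer construction in the infinite-dimensional parameter $D''$ is asserted rather than proved. You correctly flag it as the delicate point.
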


\noindent {\it Proof.} $(1)$\, It is standard (see e.g. [Kob87, Corollary 7.1.15]) that the classical moduli space ${\cal H}ol_h({\cal L})/U_h({\cal L})$ is {\it Hausdorff}, where ${\cal H}ol_h({\cal L})$ denotes the topological space of $h$-compatible Chern connections on ${\cal L}$. Equivalently, ${\cal H}ol_h({\cal L})$ is the topological space (with $C^\infty$ topology) of pairs $(D'',\,h)$ of a holomorphic structure on ${\cal L}$ and the fixed Hermitian fibre metric $h$. Thus, ${\cal H}ol_h({\cal L})$ identifies with ${\cal H}ol({\cal L})$, so ${\cal C}^{(r)}_p({\cal X},\,B,\,{\cal L})\subset{\cal H}ol_h({\cal L})$ is a topological subspace.

It follows that ${\cal M}^{(r)}_p({\cal X},\,B,\,{\cal L})\subset{\cal H}ol_h({\cal L})/U_h({\cal L})$ is a topological subspace (as we equip both quotients with the quotient topologies). Since every topological subspace of a Hausdorff topological space is Hausdorff, the contention follows.

\vspace{1ex}

$(2)$\, Let $D''\in{\cal C}^{(r)}_p({\cal X},\,B,\,{\cal L})$. Fix an arbitrary point $t_0\in B$. Then, there exist an open neighbourhood $U\subset B$ of $t_0$ such that $\overline{U}$ is compact in $B$ and a continuous section $\Gamma\in C^0\bigg(U,\,{\cal H}^{p,\,0}_{D''}({\cal L})_{|U}\bigg)$ such that \begin{eqnarray*}\Gamma_t\wedge\partial_t\Gamma_t\neq 0 \hspace{5ex} \mbox{at every point of $X_t$}, \hspace{5ex} t\in U.\end{eqnarray*}

This implies that $\widetilde\Gamma\in C^0({\cal X}_U,\,K_{{\cal X}/B}\otimes{\cal L}^2)$ is a {\it non-vanishing} continuous section on ${\cal X}_U:=\pi^{-1}(U)\subset{\cal X}$ of the holomorphic line bundle $(K_{{\cal X}/B}\otimes{\cal L}^2,\,D'')\longrightarrow{\cal X}$ whose holomorphic structure (still denoted by $D''$) is induced by the given $D''\in{\cal C}^{(r)}_p({\cal X},\,B,\,{\cal L})$, where $\widetilde\Gamma$ is defined by: \begin{eqnarray*}\widetilde\Gamma(t,\,z):= \Gamma_t(z)\wedge(\partial_t\Gamma_t)(z)\in\bigg(K_{{\cal X}/B}\otimes{\cal L}^2\bigg)_{(t,\,z)} = (K_{X_t}\otimes L_t^2)_z, \hspace{5ex} (t,\,z)\in{\cal X}\hspace{1ex} (\mbox{i.e.}\hspace{1ex} t\in B,\hspace{1ex} z\in X_t).\end{eqnarray*}

Now, suppose that we have a continuous family of holomorphic structures $D''_s\in{\cal P}^{(r)}_p({\cal X},\,B,\,{\cal L})$ close enough to $D''\in{\cal C}^{(r)}_p({\cal X},\,B,\,{\cal L})$, with $s$ varying in an open subset of $V\subset{\cal P}^{(r)}_p({\cal X},\,B,\,{\cal L})$ that is a (small enough) neighbourhood of the point $s_0\in{\cal P}^{(r)}_p({\cal X},\,B,\,{\cal L})$ corresponding to $D''$. (Thus, $D''=D''_{s_0}$.) The non-jumping assumption $(H_1^{(r)})$ in the definition of ${\cal P}^{(r)}_p({\cal X},\,B,\,{\cal L})$ yields a continuous family ${\cal H}^{p,\,0}_{D''_s}({\cal L})\longrightarrow B$ of holomorphic vector bundles whose member for $s=s_0$ is ${\cal H}^{p,\,0}_{D''}({\cal L})$. This continuous family of vector bundles identifies naturally with a continuous vector bundle ${\cal H}^{p,\,0}({\cal L})$ over $B\times V$ such that, for every $s\in V$, we have ${\cal H}^{p,\,0}({\cal L})_{|B\times\{s\}} = {\cal H}^{p,\,0}_{D''_s}({\cal L})$.

Together with the compactness of $\overline{U}$ in $B$, this implies that the original section \begin{eqnarray*}\Gamma\in C^0\bigg(U,\,{\cal H}^{p,\,0}_{D''}({\cal L})_{|U}\bigg) = C^0\bigg(U,\,{\cal H}^{p,\,0}({\cal L})_{|U\times\{s_0\}}\bigg)\end{eqnarray*} extends continuously to sections $\Gamma^{(s)}\in C^0\bigg(U,\,{\cal H}^{p,\,0}_{D''_s}({\cal L})_{|U}\bigg) = C^0\bigg(U,\,{\cal H}^{p,\,0}({\cal L})_{|U\times\{s\}}\bigg)$ for all $D''_s$ close enough to $D'' = D''_{s_0}$ (equivalently, for all $s$ close enough to $s_0$) in the $C^\infty$ topology of ${\cal P}^{(r)}_p({\cal X},\,B,\,{\cal L})$. In other words, we get a section $\Gamma^{(s)}$ for every $s\in V\subset{\cal P}^{(r)}_p({\cal X},\,B,\,{\cal L})$ after possibly shrinking $V$ about $s_0$. 

Similarly, $(K_{{\cal X}/B}\otimes{\cal L}^2,\,D''_s)\longrightarrow{\cal X}$ is a continuous family (with $s\in V$) of holomorphic line bundles (for the induced holomorphic structures, still denoted by $D''_s$) whose member for $s=s_0$ is the original $(K_{{\cal X}/B}\otimes{\cal L}^2,\,D'')\longrightarrow{\cal X}$. Since $\overline{U}$ is compact, so is ${\cal X}_{\overline{U}}$. Therefore, the {\it non-vanishing} of the section $\widetilde\Gamma\in C^0({\cal X}_U,\,(K_{{\cal X}/B}\otimes{\cal L}^2,\,D''))$ propagates to the sections $\widetilde{\Gamma^{(s)}}\in C^0({\cal X}_U,\,(K_{{\cal X}/B}\otimes{\cal L}^2,\,D''_s))$, induced by the $\Gamma^{(s)}$ and thus depending continuously on $D''_s$, for all $D''_s$ close enough to $D''$ in the $C^\infty$ topology of ${\cal P}^{(r)}_p({\cal X},\,B,\,{\cal L})$. (So, we may need to further shrink $V$ about $s_0$.)

The {\it non-vanishing} of $\widetilde{\Gamma^{(s)}}\in C^0({\cal X}_U,\,(K_{{\cal X}/B}\otimes{\cal L}^2,\,D''_s))$, for all $D''_s$ close enough to $D''$ in the $C^\infty$ topology of ${\cal P}^{(r)}_p({\cal X},\,B,\,{\cal L})$, means that $\Gamma^{(s)}\in C^0\bigg(U,\,{\cal H}^{p,\,0}_{D''_s}({\cal L})_{|U}\bigg)$ is a continuous $p$-contact section of ${\cal H}^{p,\,0}_{D''_s}({\cal L})$ for all $s\in V$.

Note, however, that this $V\subset{\cal P}^{(r)}_p({\cal X},\,B,\,{\cal L})$ depends on $U\subset B$, so $V = V_U$. Since $B$ is compact, we can cover it by finitely many such open subsets $U_1,\dots , U_N$. For each $U_j$, we get an open neighbourhood $V_j\subset{\cal P}^{(r)}_p({\cal X},\,B,\,{\cal L})$ of $s_0$ and take $V:=V_1\cap\dots V_N$. This proves that $D''_s\in{\cal C}^{(r)}_p({\cal X},\,B,\,{\cal L})$ for all $s\in V$. \hfill $\Box$

\vspace{6ex}

\noindent {\bf References.} \\

\noindent [Ber09]\, B. Berndtsson --- {\it Curvature of Vector Bundles Associated to Holomorphic Fibrations} --- Ann. Math. {\bf 169} (2009), 531-560. 

\vspace{1ex}

\noindent [BP25]\, K. Broder, D. Popovici --- {\it Holomorphic $p$-Contact and $s$-Symplectic Line Bundles} --- J. Geom. Anal. (2026), 36:137, https://doi.org/10.1007/s12220-026-02380-6 (special issue dedicated to Josip Globevnik for his 80th birthday).

\vspace{1ex}

\noindent [KPU25a]\, H. Kasuya, D. Popovici, L. Ugarte --- {\it Higher-Degree Holomorphic Contact Structures} ---  arXiv:2502.01447v2 [math.DG].

\vspace{1ex}

\noindent [KPU25b]\, H. Kasuya, D. Popovici, L. Ugarte --- {\it Properties of Holomorphic p-Contact Manifolds} --- arXiv:2511.10818v1 [math.DG].

\vspace{1ex}

\noindent [Kob87]\, S. Kobayashi --- {\it Differential Geometry of Complex Vector Bundles} --- Princeton University Press, 1987.

\vspace{1ex}

\noindent [KS60]\, K. Kodaira, D.C. Spencer --- {\it On Deformations of Complex Analytic Structures, III. Stability Theorems for Complex Structures} --- Ann. of Math. {\bf 71}, no.1 (1960), 43-76.

\vspace{1ex}

\noindent [Pop26]\, D. Popovici --- {\it $m$-Positive Stability of Holomorphic Vector Bundles and Moduli Spaces} --- arXiv:2607.17203v1 [math.DG]

\vspace{6ex}

\noindent Institut de Math\'ematiques de Toulouse, Universit\'e de Toulouse,

\noindent 118 route de Narbonne, 31062 Toulouse, France

\noindent Email: popovici@math.univ-toulouse.fr

\end{document}